\documentclass[11pt]{article}
\usepackage[numbers]{natbib}
\usepackage{fullpage}
\usepackage{setspace}
\usepackage{amsmath}
\usepackage{amssymb}
\usepackage{mathtools}
\usepackage{amsthm}
\usepackage{relsize}
\usepackage{enumitem}
\usepackage{tabularx}
\usepackage{makecell}
\usepackage{diagbox}
\newtheorem{lemma}{Lemma}
\newtheorem{theorem}{Theorem}
\newtheorem{conjecture}{Conjecture}
\newtheorem{corollary}{Corollary}
\theoremstyle{remark}
\newtheorem{ex}{Example}
\newcommand\NN{\mathbb{N}}
\newcommand\WW{\mathcal{W}}
\newcommand\HH{\mathcal{H}}
\newcommand\II{\mathbb{I}}
\newcommand\RR{\mathbb{R}}

\DeclareMathOperator{\GCD}{GCD}
\DeclareMathOperator{\Hyp}{Hyp}
\begin{document}
\pagenumbering{gobble}
\title{The idealizer of the semigroup of stable polynomials}
\author{\textbf{Michał Kudra}\footnote{Partially supported by Sheng grant no. 2023/48/Q/ST1/00048 of the National Science Center, Poland}\\
\small{Faculty of Mathematics and Computer Science, Jagiellonian University}\\
\small{Łojasiewicza 6, 30-348, Kraków, Poland}\\
\small{$\&$}\\
\small{Doctoral School of Exact and Natural Sciences}\\
\small{Jagiellonian University}\\
\small{Łojasiewicza 11, 30-348, Kraków, Poland}\\
\small{\texttt{michal.kudra@doctoral.uj.edu.pl}}\\
}
\maketitle

\begin{abstract}
It follows from the Garloff–Wagner Theorem that the set of stable polynomials of degree~$n$, denoted by $\mathcal{H}_n$, i.e., those whose all zeros lie in the left open complex half-plane, with the Hadamard product *, forms an abelian semigroup contained in the abelian group $\mathbb{R}_n^+$ of polynomials of degree $n$ with positive real coefficients. By the idealizer of the set $\mathcal{H}_n$, we refer to the largest subsemigroup of $\mathbb{R}_n^+$ in which $\mathcal{H}_n$ is an ideal. In this paper, we formulate a conjecture characterizing the idealizer of $\HH_n$ and prove it for $n \leqslant 5$. In addition, we show that the proposed condition is necessary for any polynomial to belong to the idealizer and establish, in a~distinguished special case, a sufficient condition of a similar nature that supports the conjecture.
\end{abstract}

\textbf{Keywords}\\
Idealizer, Hurwitz polynomials, Hadamard product, Stable polynomials, Multiplier sequences,

\section{Introduction}

Let $\RR_n^+$ denote polynomials of degree $n$ with positive coefficients. A non-constant polynomial $f$ of degree $n$ with real coefficients is called \textbf{stable} if all its zeros lie in the open left half of the complex plane and \textbf{quasi-stable} if all its zeros lie in the closed left half of the complex plane. One of the necessary conditions for stability is that the coefficients of a given polynomial are of the same sign, therefore, for simplicity, when discussing stable polynomials, we only consider the set
\begin{equation*}
\HH_n = \left\{ f \in \RR_n^+ : f(x)=0 \Rightarrow \Re(x)<0 \right\}.
\end{equation*}
In the case of quasi-stability, we define the set
\begin{equation*}
\HH_n^\star = \left\{ f(x)=\sum_{i=0}^na_ix^i,\; a_i \geqslant 0 \;
(i=1,\ldots,n-1),\; a_0,a_n>0 : f(x)=0 \Rightarrow \Re(x)\leqslant 0
\right\},
\end{equation*}
which for the sake of our work represents all quasi-stable polynomials as the assumption $a_0>0$ does not affect the generality of our reasoning. Details are explained in Subsection \ref{had}.

It follows from the Garloff–Wagner Theorem \cite[Thm.~1]{gw1996} that $\HH_n$ with the coefficient-wise product $\ast$, so called the Hadamard product (see Subsection \ref{had}), is an abelian semigroup in the abelian group $\RR_n^+$. Only for $n=1,2$ the semigroup $\HH_n$ is a group, since $\HH_1=\RR_1^+$ and $\HH_2=\RR_2^+$. For $n=3$, we see that $\HH_3$ does not contain the identity polynomial $\II_3$ with respect to the product~$\ast$,
i.e., $\II_n(x):=\sum_{j=0}^n x^j$. We must therefore  broaden our perspective to include $\HH_3^\star$, which from the Hermite-Biehler Theorem, is a subset of $\RR_3^+$. It does indeed contain $\II_3$, but it contains no elements that are inverses of stable polynomials (by the Routh–Hurwitz Theorem). That means it is a~commutative monoid, but not a group. However, $\HH_4^\star$ cannot be a monoid, since $\II_4 \notin \HH_4^\star$.

For each $n$, we seek a maximal subsemigroup $X_n \subset \RR_n^+$ with the Hadamard product $\ast$, such that $\HH_n \subset X_n$, $\II_n \in X_n$, and for $m \leqslant\ n$
\begin{equation}\label{ideal}
\ast : X_n \times \HH_m \rightarrow \HH_m.
\end{equation}
In other words, we want to characterize
\begin{equation*}
X_n = \{ g \in \RR_n^+ : \forall m \leqslant n, \; \forall f \in \HH_m \;\;\;
f \ast g \in \HH_m \} = \{g \in \RR_n^+ : \forall m \leqslant n \;\;\;
g \ast \HH_m \subset \HH_m\}.
\end{equation*}
The set $X_n$ defined this way is the largest submonoid of the group $\RR_n^+$, in which $\HH_m$ is an ideal. Such a set is called the \textbf{idealizer} of $\HH_m$ with respect to the product $\ast$.

If $n=1,2$, then $X_n=\RR_n^+$. It is easy to see that for $n=3$ we have
$X_3 = \HH_3^\star$. As for $n=4$, in the paper \cite{bg2021}, a~family $W_n$
was introduced for which condition (\ref{ideal}) holds in this case. We show
that the desired maximal subsemigroup for $n=4$ is $X_4 = \overline{\WW_4}$,
where $\overline{\WW_n}$ denotes the closure of the set $\WW_n$
(see Section \ref{main}). To our knowledge, no general approach for addressing
related problems has yet been established. Available methods seem largely limited
in scope, and as some recent papers show, several results have been obtained only
for polynomials of degree $n \leqslant 5$ or less (see \cite{aag2024},
\cite{bbc2017}, \cite{bbck2024}, \cite{bg2021}, \cite{wz2014}). After
considering the case where $n=4$, for $n \geq 5$, one can suppose that
$\overline{\WW_n}$ is a good candidate for the maximal subsemigroup $X_n$.
However, in Example \ref{ex1}, we show that $\overline{\WW}_5 \neq X_5$.
In Section \ref{necessary}, we give a definition of a family of polynomials
$Y_n$ for which $Y_1=\RR_1^+=X_1$, $Y_2=\RR_2^+=X_2$, $Y_3=\HH_3^\star=X_3$,
and $Y_4=\overline{W_4}=X_4$ (see Theorem \ref{main4}). Therefore, we propose
the following conjecture.
\begin{conjecture}\label{conjecture1}
For any $n \in \NN$, equality $Y_n = X_n$ holds.
\end{conjecture}
The statement is trivial for $n=1,2$, while the case $n=3$ is readily verified.
We prove that Conjecture \ref{conjecture1} is true for $n = 4$ (Theorem \ref{main4}) and $n=5$ (Theorem \ref{main5}). Moreover, we show that $X_n \subset Y_n$ for all $n$ (Theorem \ref{necessarythm}). We also investigate the idealizer of the set of quasi-stable polynomials. In addition, for a special case, we show a sufficient condition that coincides with Conjecture \ref{conjecture1} (Theorem \ref{scase}).

The paper is organized as follows. In Section \ref{basic}, we review the
required background from stability theory. We then show necessary conditions for
belonging to the idealizer $X_n$ in Section \ref{necessary}. The main results
of the paper are Theorems \ref{main4} and \ref{main5}, presented in
Section \ref{main}, where we prove that Conjecture~\ref{conjecture1} is true
for $n=4$ and $n=5$. Section \ref{quasi} provides additional information on the
idealizer of quasi-stable polynomials, and Section \ref{misc} is devoted to
supplementary results in a special case, in support of Conjecture \ref{conjecture1}. Finally, Section \ref{exes} provides examples completing our results.

\section{Basic definitions and theorems}\label{basic}

In the following subsections, we present several characterizations of stability
and quasi-stability, and we recall the classical Garloff–Wagner Theorem. These
results are extensively used throughout the rest of this paper.

\subsection{Stable and quasi-stable polynomials}

The \textbf{Hurwitz matrix} of the polynomial
$f(x)=a_nx^n+\ldots+a_1x+a_0$ is defined as
\begin{equation*}
H(f)={\begin{bmatrix}
a_{n-1} &a_{n-3} &a_{n-5}    &\ldots &0\\
a_n   &a_{n-2} &a_{n-4}  &\ldots &0\\
0     &a_{n-1} &a_{n-3}  &\ldots &0\\
0     &a_{n} &a_{n-2}  &\ldots &0\\
\vdots&\vdots&\vdots &\ddots &\vdots  \\
0     &0     &0      &\ldots &a_0
\end{bmatrix}},
\end{equation*}
and we denote its principal minors by $\Delta_k(f)$. In particular,
$\Delta_1(f)= a_{n-1}$, $\Delta_n(f)=\det{H(f)}=a_0 \Delta_{n-1}(f)$.

Among the best-known equivalent conditions for stability are the Routh–Hurwitz
and Liénard–Chipart Theorems
(see \cite[Thm.~11.4.5, Thm.~11.4.7]{rs2002}).

\begin{theorem}[Routh-Hurwitz]
A polynomial $f \in \mathbb{R}^+_n$ is stable if and only if
\begin{equation*}
\Delta_k(f) >0 \; \textrm{ for all } \; k =1, \ldots, n.
\end{equation*}
\end{theorem}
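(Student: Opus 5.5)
We prove the classical Routh–Hurwitz criterion by induction on $n$, using a reduction step that lowers the degree by one and changes the Hurwitz minors in a controlled way. For $n=1$ the statement is trivial: $f=a_1x+a_0$ with $a_0,a_1>0$ has the single root $-a_0/a_1<0$, and $\Delta_1(f)=a_0>0$. For $n=2$ the condition $\Delta_1=a_1>0$, $\Delta_2=a_0a_1>0$ is just positivity of the coefficients, and a real quadratic with positive coefficients has both roots in the open left half-plane.

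For the inductive step, let $f(x)=h(x^2)+xg(x^2)$ be the even/odd splitting. Assuming $a_{n-1}>0$, set $c=a_n/a_{n-1}$ and define the reduced polynomial
\begin{equation*}
f_1(x)=f(x)-c\,x\bigl(a_{n-1}x^{n-1}+a_{n-3}x^{n-3}+\cdots\bigr),
\end{equation*}
which has degree $n-1$. This is one step of the Routh algorithm: Gaussian elimination of the second row of $H(f)$ by the first row. Two facts are then needed.

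\textbf{Minors.} A row-operation computation shows that $H(f_1)$ is obtained from $H(f)$ by this elimination and deletion of the first row and column. Hence
\begin{equation*}
\Delta_{k+1}(f)=a_{n-1}\Delta_k(f_1),\qquad k=1,\ldots,n-1.
\end{equation*}
This is routine bookkeeping with the banded structure of $H$.

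\textbf{Stability transfer.} For $a_{n-1}>0$ and $c>0$, $f$ is stable if and only if $f_1$ is stable (with positive coefficients). I would prove this by a homotopy $f_t(x)=f(x)-t\,c\,x\,g^{\ast}(x)$, $t\in[0,1]$, where $g^{\ast}$ is the odd part written above. On the imaginary axis, $f_t(iy)$ has the same real part as $f(iy)$ (or the same imaginary part, depending on parity), and one checks that no root of $f_t$ crosses $i\RR$. Indeed, a root $iy$ of $f_t$ would force both the even and the odd part of $f$ to vanish compatibly, giving a common imaginary-axis zero of the parts of $f$. This contradicts stability of $f$ in one direction and stability of $f_1$ in the other, provided we also handle $y=0$ using $a_0>0$. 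The degree drops at $t=1$, so one root escapes to infinity. A local analysis near $t=1$, via the leading terms $a_nx^n$ and $a_{n-1}x^{n-1}$, shows it escapes along the negative real axis, because $a_n(1-t)x+a_{n-1}\approx0$ gives $x\approx -a_{n-1}/(a_n(1-t))\to-\infty$.

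Combining the two facts: if $f$ is stable, then $a_{n-1}>0$, $f_1$ is stable, so by induction all $\Delta_k(f_1)>0$, and hence all $\Delta_k(f)>0$. Conversely, if all $\Delta_k(f)>0$, then $a_{n-1}=\Delta_1>0$ and all $\Delta_k(f_1)>0$. By induction $f_1$ is stable (in particular it lies in $\RR_{n-1}^+$), and so $f$ is stable.

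The main obstacle is the stability-transfer step, in particular ruling out imaginary-axis crossings uniformly in $t$ and controlling the escaping root. A cleaner alternative I would try first is the Hermite–Biehler theorem: $f$ is stable if and only if $h$ and $g$ have real, negative, simple, strictly interlacing zeros with positive leading coefficients. The Routh step replaces the pair $(h,g)$ by $(g,\,h-c\,yg)$, which is a Euclidean-algorithm step and visibly preserves interlacing, so the induction becomes purely algebraic.
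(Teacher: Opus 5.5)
\textbf{Verdict: your argument is correct except for one circular step in the induction, which is easy to repair.}

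The paper does not prove this statement; it quotes it as classical background from \cite[Thm.~11.4.5]{rs2002}. So your argument is a self-contained route: one Routh reduction step plus a root-continuity homotopy.

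Both ingredients are sound. With $H(f)_{ij}=a_{n-2j+i}$, subtracting $c$ times row $2j-1$ from row $2j$ does three things:
\begin{itemize}
\item it preserves every leading principal minor;
\item it clears the first column below $a_{n-1}$;
\item it leaves exactly $H(f_1)$ in the lower right corner.
\end{itemize}
Hence $\Delta_{k+1}(f)=a_{n-1}\Delta_k(f_1)$.

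The transfer step is easier than you fear. Write $f=p+q$ with $q$ the part of parity $n-1$. Then $f_t(iy)=0$ iff $p(iy)=q(iy)=0$. So the zeros of $f_t$ on $i\RR$ are the same for every $t\in[0,1]$, including $t=1$ and $y=0$; there is nothing to make uniform. The escaping root is controlled by Vieta. The roots of $f_t$ sum to $-a_{n-1}/((1-t)a_n)$, while $n-1$ of them converge to the roots of $f_1$. The remaining one is therefore real and tends to $-\infty$.

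The gap is in the converse direction of the induction. Your inductive statement concerns $f\in\RR_n^+$. So before concluding from $\Delta_k(f_1)>0$ that $f_1$ is stable, you must already know $f_1\in\RR_{n-1}^+$. You instead obtain this as a consequence of the conclusion (``in particular it lies in $\RR_{n-1}^+$''), which is circular.

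It is not automatic. The coefficients of $f_1$ are $a_{n-1},\,a_{n-2}-ca_{n-3},\,a_{n-3},\,a_{n-4}-ca_{n-5},\dots$. The first modified one is fine, since $a_{n-2}-ca_{n-3}=\Delta_2(f)/a_{n-1}>0$. But for $j\geqslant 2$, the coefficient $a_{n-2j}-ca_{n-2j-1}=(a_{n-1}a_{n-2j}-a_na_{n-2j-1})/a_{n-1}$ is a non-leading $2\times2$ minor of $H(f)$ (rows $1,2$, columns $1,j+1$). The hypotheses $\Delta_k(f)>0$ do not give its sign directly; its positivity is only known a posteriori, from the stability you are trying to prove.

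The repair is to induct on the usual stronger form of the theorem: a real polynomial of degree $n$ with $a_n>0$ is stable iff $\Delta_k(f)>0$ for $k=1,\dots,n$. Everything then goes through:
\begin{itemize}
\item The base case $n=1$ is still immediate.
\item The transfer step uses only $a_n>0$ and $a_{n-1}>0$.
\item In the forward direction, $a_{n-1}>0$ still holds, because a stable real polynomial with positive leading coefficient has all coefficients positive.
\item In the converse direction, $f_1$ has leading coefficient $a_{n-1}=\Delta_1(f)>0$, so the inductive hypothesis applies to it legitimately.
\end{itemize}
Specializing to $\RR_n^+$ at the end gives the stated theorem.
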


\begin{theorem}[Liénard-Chipart]
Given a polynomial $f \in \RR_n^+$, the following conditions are equivalent:
\begin{itemize}
\item[(i)]  $f \in \HH_n$,
\item[(ii)] $\Delta_2(f)>0 , \Delta_4(f)>0 , \ldots ,\Delta_{2\lfloor n/2 \rfloor}(f) >0,$
\item[(iii)] $\Delta_1(f)>0 , \Delta_3(f) >0 , \ldots , \Delta_{2\lfloor (n+1)/2 \rfloor -1 } (f)>0,$
\end{itemize}
where $\lfloor x \rfloor$ denotes the integer part of the number $x$.
\end{theorem}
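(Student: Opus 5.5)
The plan is to reduce (ii) and (iii) to the Routh–Hurwitz criterion using only positivity of the coefficients; the equivalence then follows. The direction (i)$\Rightarrow$(ii),(iii) is immediate from the Routh–Hurwitz Theorem. For the converse, I would not attack the Hurwitz minors directly. Instead I would use the even/odd decomposition $f(x)=h(x^2)+xg(x^2)$ and the Hermite–Biehler interlacing characterization. Stability of $f$ is equivalent to: $a_n,a_{n-1}>0$ and the zeros of $h(-y)$ and $g(-y)$ are real, simple, positive and strictly interlacing. Equivalently, $g/h$ (or $h/g$) is an $S$-function. Since all coefficients are positive, $h$ and $g$ have positive coefficients, so they have no nonnegative real zeros. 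This sign information is exactly what the Liénard–Chipart condition exploits.

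The key step is to express the Hurwitz minors through the Sturm/Euclid (Routh) sequence of $h$ and $g$. Performing the Routh algorithm (Gaussian elimination on $H(f)$ without pivoting) gives pivots $c_1,\dots,c_n$ with $\Delta_k=c_1\cdots c_k$ whenever the earlier pivots are nonzero. I would then use the classical Hurwitz–Gantmacher identity linking the even-indexed and odd-indexed minors to the Cauchy index of $g/h$. Its consequence is
\[
\operatorname{sign}\text{ pattern of }(\Delta_1,\Delta_3,\dots)\text{ determines that of }(\Delta_2,\Delta_4,\dots)
\]
once the coefficients are positive. Concretely, I would prove the Cauchy-index count:
\[
I_{-\infty}^{\infty}\frac{g(-y)}{h(-y)} = n_h - 2V(\Delta_{\text{even}})
\]
and similarly with the odd minors. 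Here $V$ counts sign changes. The positivity of the coefficients forces all poles to lie in $y>0$, which makes the two counts compatible.

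The cleaner route, which I would try first, is a Sturm-chain argument. Suppose all even minors are positive. Then the Euclidean algorithm on $h(-y),g(-y)$ has full length with leading-coefficient ratios of constant sign. This gives the maximal Cauchy index, so $h(-y)$ and $g(-y)$ have all real simple zeros that interlace. The zeros lie in $y>0$ because the coefficients are positive. Hermite–Biehler then gives stability. The odd case is symmetric, with the roles of $h$ and $g$ swapped via $x^n f(1/x)$, which reverses the coefficients and preserves both stability and positivity. I expect the main obstacle to be the bookkeeping that identifies the product of consecutive Routh pivots with the alternate minors, and the degenerate case where an intermediate odd (resp. even) minor vanishes. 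I would handle that case by a small perturbation of the coefficients within $\RR_n^+$ together with continuity of the zeros.
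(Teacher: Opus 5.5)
\textbf{There is a genuine gap.} The paper gives no proof of this statement; it is quoted from Rahman--Schmeisser (Thm.~11.4.7). The classical proof (e.g.\ Gantmacher's) goes through Cauchy indices. Your direction (i)$\Rightarrow$(ii),(iii) via Routh--Hurwitz and your reduction to Hermite--Biehler are fine. The gap sits exactly where the positivity of the coefficients has to do the work.

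In your preferred Sturm-chain route, you assert that positivity of $\Delta_2,\Delta_4,\dots$ alone makes the Euclidean chain of $f_e,f_o$ full-length with constant-sign leading ratios. You then use positivity of the $a_i$ only afterwards, to place the zeros on the negative axis. The signs you need are not determined by the assumed minors, and obtaining them is precisely the content of the theorem:
\begin{itemize}
\item Your Routh pivots are $c_k=\Delta_k/\Delta_{k-1}$, so alternate minors only fix the signs of the products $c_{2j-1}c_{2j}$.
\item If you run Euclid in $u=x^2$ on $(f_e,f_o)$, the leading coefficients of the remainders are governed by $\Delta_{n-1},\Delta_{n-3},\dots$. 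So the hypothesis directly controls the chain only for (ii) with $n$ odd, or (iii) with $n$ even.
\item For instance, with $n=6$ under (ii), the first remainder of $f_e$ modulo $f_o$ has leading coefficient $-\Delta_3/a_5^2$, and $\Delta_3$ is not among the hypotheses.
\end{itemize}
Your transfer to the other parity via $x^nf(1/x)$ also fails. For even $n$, the reversal does not swap the even and odd parts at all. In no case does it exchange the two families of minors, since $\Delta_{n-1}$ is invariant up to sign; for example, $\Delta_2(x^3f(1/x))=\Delta_2(f)$ when $n=3$.

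The missing idea is the one you only gesture at in your first route, and there it is misstated. The odd- and even-order minors do not both compute the Cauchy index of $f_o/f_e$. If they did, (ii)$\Leftrightarrow$(iii) would hold without any sign assumption, which fails already for $x^2-x-1$, where $\Delta_2>0>\Delta_1$. What is true is this:
\begin{itemize}
\item One family (odd orders for even $n$, even orders for odd $n$) determines $I_{-\infty}^{\infty}(f_o/f_e)$, and the other determines $I_{-\infty}^{\infty}(uf_o/f_e)$.
\item Since $f_e$ has positive coefficients, every real pole lies in $u<0$ and $u=0$ is not a pole. Multiplication by $u$ therefore reverses every jump, so $I(uf_o/f_e)=-I(f_o/f_e)$.
\item Hence whichever family is assumed positive makes one index extremal, and therefore both.
\item The extremal index of $f_o/f_e$ gives real, simple zeros of $f_e$ that interlace correctly with those of $f_o$; these zeros are negative by positivity. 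Hermite--Biehler then yields stability.
\end{itemize}
Done this way, you use only the sign pattern of the assumed family, so no degenerate case arises. If you keep your perturbation argument instead, note that continuity of zeros only places the zeros of $f$ in the closed half-plane. You must still exclude zeros $i\omega$ with $\omega\neq0$. Such a zero gives a common zero of $f_e$ and $f_o$, hence $\Delta_{n-1}=0$, since $\Delta_{n-1}$ is up to sign their resultant. Both (ii) and (iii) rule this out, as each contains $\Delta_{n-1}$ or $\Delta_n=a_0\Delta_{n-1}$ with $a_0>0$.
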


In the case of quasi-stable polynomials, it is easy to obtain weak inequalities
for the principal minors as the necessary conditions. However, these are not
equivalent. They were first formulated in Theorem 4.9 in \cite{agt2018}. By
$f_e$ and $f_o$ let us denote even and odd parts of a polynomial $f$, such that
$f(x) = f_e(x^2) + xf_o(x^2)$.

\begin{theorem}[Adm, Garloff, Tyaglov]\label{agt}
Given polynomial $f$ with real coefficients, the following statements are
equivalent:
\begin{itemize}
\item[(i)] The polynomial $f$ belongs to $\HH_n^\star$ with the stability index m, that is, $f$ has $m$ zeros in the open left complex half-plane.
\item[(ii)] The principal minors of the Hurwitz matrix of $f$ are positive up to order $m$, that is
\begin{equation*}
\Delta_1(f)>0,\;\Delta_2(f)>0,\;\ldots \Delta_m(f)>0 ,\;
  \Delta_{m+1}(f)=\ldots =\Delta_n(f)=0
  \end{equation*}
and $\GCD(f_e,f_o)$ has only negative zeros, where $\GCD(f,g)$ denotes a monic polynomial of the highest possible degree simultaneously dividing polynomials
$f$ and $g$.
\end{itemize}
\end{theorem}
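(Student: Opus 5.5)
The plan is to split off the common factor of the even and odd parts and reduce everything to the Routh–Hurwitz Theorem applied to the cofactor. Write $q:=\GCD(f_e,f_o)$ and $f(x)=q(x^2)\,p(x)$, where $p=p_e(x^2)+xp_o(x^2)$ and $p_e,p_o$ are coprime. Put $d=\deg q$, so that $\deg p=n-2d$. Two facts make this splitting natural.

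\textbf{Imaginary-axis zeros.} If $f$ has real coefficients, then $f(i\omega)=0$ with $\omega\in\RR$ means exactly that $f_e(-\omega^2)=0$ and $f_o(-\omega^2)=0$. So the zeros of $f$ on the imaginary axis are precisely the points $\pm i\sqrt{-\lambda}$, where $\lambda\le 0$ runs over the nonpositive real zeros of $q$. When $a_0>0$ the value $\lambda=0$ is excluded.

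\textbf{Zeros of $q(x^2)$ off the axis.} If $q$ had a zero $\lambda$ that is not a nonpositive real number, then $q(x^2)$ would have the two zeros $\pm\sqrt{\lambda}$, one of which lies in the open right half-plane. Hence $f\in\HH_n^\star$ forces all zeros of $q$ to be negative, and all zeros of $q(x^2)$ to lie on the imaginary axis. In that case $f$ has $m$ zeros in the open left half-plane exactly when $p$ is stable and $m=\deg p=n-2d$.

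\textbf{Hurwitz minors under multiplication by $q(x^2)$.} The algebraic core is to show that multiplying $p$ by $q(x^2)$ acts on the Hurwitz matrix by a block-triangular transformation. Since $f_e=q\,p_e$ and $f_o=q\,p_o$, the matrix $H(f)$ factors as $H(p)$, suitably padded, times an upper triangular band matrix built from the coefficients of $q$ with leading coefficient $q_d$ on the diagonal. Equivalently, one can use the representation of $\Delta_{2k}$ and $\Delta_{2k+1}$ as subresultant-type determinants of the pair $(f_e,f_o)$. This should give
\[
\Delta_k(f)=c_k\,\Delta_k(p)\quad (k\le m), \qquad \Delta_k(f)=0 \quad (k>m),
\]
with $c_k>0$ a power of the leading coefficient of $q$. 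The vanishing for $k>m$ follows from the classical fact that the rank of the Hurwitz matrix, like that of the Sylvester matrix of $(f_e,f_o)$, equals $n-2\deg\GCD(f_e,f_o)$. I would prove this by writing the matrices in terms of subresultants, whose first nonvanishing index detects the degree of the gcd.

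\textbf{Both directions.} For (i)$\Rightarrow$(ii), the first paragraph gives $p$ stable with $\deg p=m$ and $q$ with only negative zeros. Then $\Delta_1(p),\ldots,\Delta_m(p)>0$ by Routh–Hurwitz, and the minor identity transfers this to $f$, including $\Delta_{m+1}(f)=\cdots=\Delta_n(f)=0$.

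For (ii)$\Rightarrow$(i), the rank statement together with $\Delta_m(f)\neq 0$ and $\Delta_{m+1}(f)=0$ shows that $\deg q=(n-m)/2$, so $\deg p=m$. The minor identity then yields $\Delta_k(p)>0$ for all $k\le m$. The general Hurwitz criterion (positive leading coefficient and positive minors) makes $p$ stable; in particular $p$ has positive coefficients, so Routh–Hurwitz as stated in the paper also applies. Finally, $q$ having negative zeros places the zeros of $q(x^2)$ on the imaginary axis, so $f\in\HH_n^\star$ with stability index $m$.

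\textbf{Main obstacle.} I expect the hardest step to be the rank/minor statement: showing that $\Delta_k(f)$ vanishes for every $k>m$, not merely for $k=n$, and that $\Delta_m(f)$ is a positive multiple of $\Delta_m(p)$. I would first try to establish it through the explicit factorization $H(f)=H(p)\,T(q)$ with $T(q)$ triangular, checking the leading principal minors directly. Should this fail, I would fall back on the subresultant theory of the pair $(f_e,f_o)$.
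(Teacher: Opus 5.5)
The paper gives no proof of this theorem; it is quoted from Adm, Garloff and Tyaglov, so your proposal has to stand on its own. Its architecture is right. The step you call the main obstacle is easy along the route you try first. From $H(f)_{ij}=a_{n-2j+i}$ and $a_k=\sum_s q_s p_{k-2s}$ you get $H(f)=\widehat H(p)\,T$. Here $\widehat H(p)$ is the $n\times n$ section of the infinite Hurwitz matrix $\bigl(p_{r-2j+i}\bigr)$ of $p$, with $r=\deg p$, and $T$ is upper triangular Toeplitz with diagonal $q_d=1$. For $k>r$ the $k$-th column of $\widehat H(p)$ vanishes in its first $k$ rows. Hence $\Delta_k(f)=\Delta_k(p)$ for $k\le r$ and $\Delta_k(f)=0$ for $k>r$, with no rank theory needed. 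Together with your zero-location argument (using $a_0>0$), this proves (i)$\Rightarrow$(ii).

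The gap is in (ii)$\Rightarrow$(i), at the step $\deg p=m$. The ``classical fact'' you invoke is false. For $f=(x^2+1)(x+1)$ the Hurwitz matrix has rows $(1,1,0),(1,1,0),(0,1,1)$, so its rank is $2$, while $n-2\deg\GCD(f_e,f_o)=1$. (The Sylvester matrix of $(f_e,f_o)$ loses rank $\deg\GCD$, not $2\deg\GCD$.) No rank count could give your conclusion anyway: $\Delta_m(f)\neq 0$ only bounds the rank from below, and $\Delta_{m+1}(f)=0$ gives no upper bound.

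The factorization yields $m\le r$. What you are missing is $\Delta_r(f)=\Delta_r(p)\neq 0$; once you have it, the vanishing of $\Delta_{m+1},\ldots,\Delta_n$ in (ii) forces $r\le m$. It follows from $\Delta_r(p)=p(0)\,\Delta_{r-1}(p)$. The rows of the leading $(r-1)\times(r-1)$ block of $H(p)$ are exactly the rows of the Sylvester matrix of $p_e,p_o$ (with formal degrees). So $\Delta_{r-1}(p)$ is $\pm\mathrm{Res}(p_e,p_o)$ times a power of the nonzero leading coefficient $a_n$, and it is nonzero by coprimality.

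This argument needs $p(0)\neq 0$, i.e. $a_0\neq 0$, which (ii) does not provide. For example, $f=x^2+x$ satisfies (ii) with $m=1$ and $\GCD(f_e,f_o)=1$, yet $\deg p=2$ and $f\notin\HH_2^\star$. Likewise, the positive leading coefficient you need for the Hurwitz criterion is not in (ii): $f=1-x$ satisfies (ii) with $m=1$. So state explicitly that $a_n>0$ and $a_0>0$ are assumed, as in the paper's convention for $\HH_n^\star$. Then replace the rank argument by the resultant argument above.
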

This theorem plays a crucial role in the proofs of Theorem \ref{sufficient4bg} and Lemmas \ref{rownowaznosc1} and \ref{rownowaznosc2}.

\subsection{Interlacing and Hermite-Biehler Theorem}

Suppose that $h \in \RR_m^+$ and $g \in \RR_n^+$ are polynomials with only real zeros, where those of $h$ are $\alpha_1,\ldots, \alpha_m$, and those of $g$ are $\beta_1,\ldots,\beta_n$. We say that $g$ \textbf{interlaces} $h$ if either $m=n+1$, and
\begin{equation*}
\alpha_1 \leqslant \beta_1 \leqslant \alpha_2 \leqslant \ldots
  \leqslant \beta_n \leqslant \alpha_{n+1}
\end{equation*}
or $m=n$, and
\begin{equation*}
\beta_1 \leqslant \alpha_1 \leqslant \beta_2 \leqslant \ldots
  \leqslant \beta_n \leqslant \alpha_n.
\end{equation*}
We denote the interlacing property by $g \prec h$. Moreover, for any polynomial $f \in \RR_n^+$ with only real zeros, we adopt the convention that $0 \prec f$ and $f \prec 0$. We are now ready to formulate the following theorem, which is a slightly simplified version of the Hermite–Biehler Theorem given by Garloff and Wagner, adapted to polynomials from $\HH_n^\star$ (cf.\ \cite[Thm.~3]{gw1996}).

\begin{theorem}[Hermite-Biehler]\label{hb}
Let $f$ be a polynomial. Then
\begin{itemize}
\item[(i)]  $f \in \HH_n^\star \Leftrightarrow f_e$ and $f_o$ have only negative zeros and $f_o \prec f_e$,
\item[(ii)]  $f \in \HH_n \Leftrightarrow f \in \HH_n^\star$ and $\GCD(f_e, f_o) = 1$,
\item[(iii)] $f$ has zeros only on the imaginary axis $\Leftrightarrow f \in \HH_n^\star$ and $f_o=0$,
\item[(iv)]  $f$ has exactly one zero on the negative half-axis and the reston the imaginary axis $\Leftrightarrow f \in~\HH_n^\star,\; f_o \neq 0$, and $f_e =c f_o$.
\end{itemize}
\end{theorem}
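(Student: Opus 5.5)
The plan is to reduce everything to the classical (strict) Hermite–Biehler theorem and then treat the boundary cases by factoring out the common part of $f_e$ and $f_o$. The strict statement reads: for $f\in\RR_n^+$, $f\in\HH_n$ if and only if $f_e$ and $f_o$ have only simple negative zeros that strictly interlace, with the degree pattern forced by $n$. If it has to be proved, I will use the argument principle on $f(iy)=f_e(-y^2)+iy\,f_o(-y^2)$. The polynomial $f$ is stable exactly when $\arg f(iy)$ increases monotonically by $n\pi$ as $y$ runs over $\RR$. Monotonicity of the phase means the zeros of the real part $f_e(-y^2)$ and of the imaginary part $y f_o(-y^2)$ alternate. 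In the variable $t=-y^2$ this is the strict interlacing of $f_e$ and $f_o$ on the negative axis.

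The key observation for the non-strict parts concerns purely imaginary zeros. Since $a_0>0$, we have $f(0)\neq 0$, so for $y\neq 0$ we get $f(iy)=0$ if and only if $f_e(-y^2)=0=f_o(-y^2)$. Hence the imaginary zeros of $f$ are exactly the roots of $G(x^2)$, where $G=\GCD(f_e,f_o)$, and $f=G(x^2)\,h(x)$ with $h_e=f_e/G$ and $h_o=f_o/G$.

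For (i), direction ($\Rightarrow$):
\begin{itemize}
\item If $f\in\HH_n^\star$, then $f(x+\varepsilon)\in\HH_n$ for every $\varepsilon>0$ (all zeros are shifted left), and $f(x+\varepsilon)\to f$ as $\varepsilon\to 0$.
\item By the strict theorem, the even and odd parts of $f(x+\varepsilon)$ have negative zeros that strictly interlace.
\item These parts converge coefficientwise to $f_e$ and $f_o$. Zeros depend continuously on coefficients (Hurwitz's theorem), and weak interlacing of real zeros is a closed condition. So in the limit $f_e$ and $f_o$ have only real nonpositive zeros and $f_o\prec f_e$.
\item Nonpositivity improves to negativity because $f_e(0)=a_0>0$. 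For $f_o$, a zero at $0$ would mean $a_1=0$; this case has to be excluded or absorbed via the convention $0\prec f$, and this degree bookkeeping needs care.
\end{itemize}

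For (i), direction ($\Leftarrow$): divide out $G$. The real-rootedness of $f_e,f_o$ together with $f_o\prec f_e$ should imply that $h_e$ and $h_o$ strictly interlace with simple negative zeros. So $h\in\HH$ by the strict theorem, and $G(x^2)$ has only imaginary zeros because $G$ has negative zeros. Therefore $f\in\HH_n^\star$.

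The remaining parts follow from this factorization:
\begin{itemize}
\item Part (ii): $f$ has no imaginary zeros if and only if $G=1$.
\item Part (iii): if $f_o=0$, then $f=f_e(x^2)$ with $f_e$ negative-rooted, so all zeros are imaginary. Conversely, if all zeros are imaginary they come in conjugate pairs $\pm i\beta$, so $f$ is even.
\item Part (iv): if $f_e=cf_o$, then $f=f_o(x^2)(x+c)$. Conversely, the nonimaginary zeros of $f$ are those of $h$; exactly one negative zero means $h$ is linear, which forces $f_e=cf_o$.
\end{itemize}

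I expect the main obstacle to be the step from weak to strict interlacing after removing the GCD. With weak inequalities, common zeros can coincide in multiplicity, and the two degree patterns ($\deg f_e=\deg f_o$ or $\deg f_e=\deg f_o+1$ depending on the parity of $n$) must be tracked carefully. I would first check that the multiplicities of $f_e$ and $f_o$ at any common zero differ by at most one; this follows from weak interlacing. After cancelling $G$, no common zeros remain and the inequalities become strict. The sign condition at $0$ then pins down the correct degree pattern.
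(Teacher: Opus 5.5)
The paper does not prove this theorem. It quotes it, in a simplified form adapted to $\HH_n^\star$, from Garloff and Wagner (their Theorem~3) and then uses it as a black box. So your proposal is not an alternative to an argument in the paper. It is a derivation of the quasi-stable version from the classical strict Hermite--Biehler theorem, and its skeleton is sound.

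The factorization $f=G(x^2)h$ with $G=\GCD(f_e,f_o)$ is the right organizing device. Since $h(0)=a_0/G(0)\neq 0$ and $\GCD(h_e,h_o)=1$, $h$ has no zeros on the imaginary axis, so all imaginary zeros of $f$ sit in $G(x^2)$, and (ii)--(iv) follow essentially as you describe. Your multiplicity argument is also correct: at a common zero the multiplicities differ by at most one, so after cancelling $G$ the interlacing becomes strict with simple zeros. Your route explains where the weak inequalities and the GCD condition come from. The citation buys brevity, at the cost of taking the exact normalizations on trust.

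Two points need tightening.

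First, the convention $0\prec f$ concerns $f_o\equiv 0$, so it cannot absorb a nonzero $f_o$ with $f_o(0)=0$. That case simply cannot occur. In both patterns of the paper's $\prec$ the largest zero belongs to $f_e$, and $f_e(0)=a_0>0$, so the limiting weak interlacing places every zero of $f_o$ below a negative number.

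The degree bookkeeping closes the same way, by trapping. For odd $n$, $\deg f_o$ is fixed by $a_n>0$. The zeros of the even part of $f(x+\varepsilon)$ lie between the smallest zero of its odd part and $0$, so they stay bounded while its leading coefficient tends to $a_{n-1}$. Hence $a_{n-1}=0$ would force this even part to tend to $0$, contradicting $f_e(0)=a_0$. For even $n$, the zeros of the odd part are trapped between those of the even part, so $f_o$ is either $\equiv 0$ or of full degree.

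You can also skip the limit entirely. For $f\in\HH_n^\star$, the zeros $\pm\sqrt{\gamma}$ of $G(x^2)$ are zeros of $f$, which forces $G$ to have only negative zeros and $h$ to be stable. Then apply the strict theorem to $h$ and note that multiplying $h_e$ and $h_o$ by a common negative-rooted factor preserves weak interlacing.

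Second, the strict theorem you invoke must carry its orientation clause: the zero nearest the origin belongs to $h_e$. For odd degree this is not implied by degrees and positivity. For example, $f_e=x+3$ and $f_o=x+\tfrac{1}{3}$ give $x^3+x^2+\tfrac{1}{3}x+3\notin\HH_3^\star$. The paper's $\prec$ encodes this orientation, and it tacitly requires $f_e,f_o$ to have positive coefficients. Cancelling common zeros preserves the orientation, so your ($\Leftarrow$) step goes through once this is stated explicitly.
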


The above description of stability and quasi-stability lies at the origin of the family $Y_n$ defined in Section \ref{necessary}.

\subsection{Hadamard product}\label{had}

\textbf{Hadamard product} of the polynomials
\begin{equation*}
f(x)=a_nx^n+a_{n-1}x^{n-1}+ \ldots + a_1 x +a_0,
\end{equation*}
\begin{equation*}
g(x)=b_mx^m+b_{m-1}x^{m-1}+ \ldots + b_1 x +b_0,
\end{equation*}
is defined to be
\begin{equation*}
(f \ast g) (x) := a_kb_kx^k + a_{k-1}b_{k-1}x^{k-1} + \ldots +
a_1b_1x+a_0b_0, \textrm{ where } k=\min\{n,m\}.
\end{equation*}

The problem of determining the zeros of $f*g$ has been studied by many mathematicians, including Maló, Weisner, Schur, de Bruijn, and Takagi (see \cite[Sec.~5]{rs2002}). However, the result most relevant to us comes from Garloff and Wagner. As the Hadamard product with quasi-stable polynomials requires a bit more careful treatment, we present a slightly simplified version of their theorem (cf.\ \cite[Thm.~1]{gw1996}).

\begin{theorem}[Garloff-Wagner]
Let $f \in \HH_n^\star$, $p \in \HH_m^\star$, and $k= \min \{m,n\}$.
\begin{itemize}
\item[(i)] Then $f \ast p \in \HH_k^\star$.
\item[(ii)] If $f$ or $p$ has zeros exclusively on the imaginary axis, then so does $f \ast p$.
\item[(iii)] If both $f$ and $p$ have exactly one zero on the negative half-axis and the rest on the imaginary axis, then so does $f * p$.
\item[(iv)]  If neither $(ii)$ nor $(iii)$ holds, then $f \ast p$ has no zeros on the imaginary axis.
\item[(v)] As a \textbf{special case of $(iv)$}, if $f \in \HH_n$ and $p \in \HH_m$, then $f \ast p \in \HH_k$.
\end{itemize}
\end{theorem}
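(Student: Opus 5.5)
The plan is to reduce everything to the Hermite--Biehler Theorem \ref{hb}, using the fact that the Hadamard product respects the even/odd splitting. Writing $f(x)=f_e(x^2)+xf_o(x^2)$ and $p(x)=p_e(x^2)+xp_o(x^2)$, one checks directly from the definition that
\begin{equation*}
(f\ast p)_e = f_e \ast p_e, \qquad (f\ast p)_o = f_o\ast p_o ,
\end{equation*}
where on the right the Hadamard product is taken in the variable $y=x^2$. Degree truncation to $k=\min\{m,n\}$ only removes top coefficients consistently on both sides. So part (i) amounts to two claims about the pairs $(f_e,f_o)$ and $(p_e,p_o)$, which Theorem \ref{hb}(i) says have only negative zeros and satisfy $f_o\prec f_e$, $p_o\prec p_e$:
\begin{itemize}
\item[(a)] $f_e\ast p_e$ and $f_o\ast p_o$ have only nonpositive real zeros;
\item[(b)] $f_o\ast p_o \prec f_e\ast p_e$.
\end{itemize}

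Claim (a) is the classical Maló--Schur theorem: the Hadamard product of two polynomials with only real nonpositive zeros again has only real nonpositive zeros. It can be proved via the Schur composition, or via Laguerre's theorem that $\{q_j\}$ with $q$ having negative zeros is a multiplier sequence. I will quote it, and check separately that zeros at the origin cannot occur, since the constant coefficients are positive.

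Claim (b) is the heart of the matter and the step I expect to be the main obstacle. The naive idea is to use the Hermite--Kakeya--Obreschkoff characterization: $g\prec h$ iff every combination $\lambda g+\mu h$ is real-rooted. The trouble is that bilinearity of $\ast$ produces cross terms $f_e\ast p_o$ and $f_o\ast p_e$ that do not cancel. Instead I would encode the interlacing pair as one polynomial in two variables, $F(y,z)=f_e(y)+z\,f_o(y)$, which is real stable under $f_o\prec f_e$ with negative zeros. I would then apply a Schur-type composition theorem acting only in $y$, for which Hadamard multiplication by the real-rooted, negative-zero data from $p$ is a stability preserver. This preserves stability of $F$ in the joint variables and hence the interlacing. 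If that route is too heavy, the fallback is the original Garloff--Wagner argument: approximate by stable polynomials, write interlacing via partial fractions $f_o/f_e=\sum c_j/(y-\alpha_j)$ with $c_j>0$, and show positivity of the corresponding residues for the product.

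The remaining parts are bookkeeping with Theorem \ref{hb}.

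For (ii), $f_o=0$ or $p_o=0$ forces $(f\ast p)_o=0$, and we use (iii) of Theorem \ref{hb}.

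For (iii), $f_e=cf_o$ and $p_e=c'p_o$ give $(f\ast p)_e$ proportional to $(f\ast p)_o$, but with the degree shift in $y$. So I would verify it directly: a polynomial with one negative zero and the rest imaginary is $(x+a)\prod(x^2+b_j)$, and its coefficient ratios $a_{2i}/a_{2i+1}=a$ are constant, which is preserved under $\ast$.

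For (iv), imaginary zeros of $f\ast p$ correspond to a nontrivial $\GCD\big((f\ast p)_e,(f\ast p)_o\big)$, i.e.\ equality cases in (b). I would show by the strictness in the composition argument that a common zero can only occur in the degenerate configurations (ii) or (iii).

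Part (v) then follows, since stable $f,p$ fall under neither (ii) nor (iii).
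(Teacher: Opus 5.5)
The paper gives no proof of this statement. It quotes it, in simplified form, from Garloff--Wagner, so your proposal has to stand on its own.

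Several parts are fine:
\begin{itemize}
\item the encoding $f_e(y)+zf_o(y)$;
\item the reduction through Theorem~\ref{hb} using $(f\ast p)_e=f_e\ast p_e$ and $(f\ast p)_o=f_o\ast p_o$;
\item claim (a) via Mal\'o's theorem;
\item parts (ii) and (iii).
\end{itemize}

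The genuine gap is claim (b). It is essentially the whole content of the theorem, and the mechanism you propose does not work. An operator acting only in $y$ sends $f_e(y)+zf_o(y)$ to $Tf_e+z\,Tf_o$, so it multiplies both parts by the \emph{same} sequence. Together with Hermite--Kakeya--Obreschkoff and Mal\'o this gives only $f_o\ast q\prec f_e\ast q$ for a single $q$. That is exactly the argument of Theorem~\ref{scase}, and it works there only because $G=(x+1)g(x^2)$ has $G_e=G_o$.

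For general $p$ you need $f_e\mapsto f_e\ast p_e$ and $f_o\mapsto f_o\ast p_o$ simultaneously. The only information about $p$ your plan feeds in (real negative zeros of $p_e,p_o$) is not enough. Take $f=Q^5$, $p_e=(y+1)^2$ and $p_o=(y+10)(y+\tfrac1{10})$. Then $f_e\ast p_e=y^2+4y+1$ and $f_o\ast p_o=y^2+\tfrac{101}{5}y+1$, and their zeros do not interlace. So the hypothesis $p_o\prec p_e$ must enter essentially. Chaining two one-sided steps, $f_o\ast p_o\prec f_e\ast p_o$ and $f_e\ast p_o\prec f_e\ast p_e$, does not help either, because interlacing is not transitive.

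A general stability-preserver theorem of Borcea--Br\"and\'en type for the two-variable coefficientwise product with $P(y,z)=p_e(y)+zp_o(y)$ cannot rescue this, because that operator does not preserve real stability. The polynomial $5-y+z(1-y)$ is real stable and $p(x)=x^3+x^2+x+\tfrac1{10}$ is stable. Yet the product $(\tfrac12-y)+z(1-y)$ vanishes at $(y,z)=(\tfrac34+\tfrac14 i,\,i)$.

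Hence any proof of (b) must use the nonnegativity of the coefficients of $f_e$ and $f_o$. The case $n=3$ already shows this: there (b) is just $a_1a_2b_1b_2\geqslant a_0a_3b_0b_3$, which follows from the two input inequalities only for nonnegative entries. What you actually need to supply, or cite from Garloff--Wagner, is the lemma that $g\prec h$ and $q\prec r$, with nonnegative coefficients and nonpositive zeros, imply $g\ast q\prec h\ast r$.

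Your fallback via residues of $(f_o\ast p_o)/(f_e\ast p_e)$ merely restates (b). The zeros of $f_e\ast p_e$ are not expressible through those of $f_e$ and $p_e$, so there is no handle on the new residues.

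Finally, (iv) rests on a ``strictness'' that is never established. It must also cover inputs such as $f=Q^5$ with $p$ stable. There $\GCD(f_e,f_o)\neq 1$ and nothing is strict on the side of $f$, so (iv) needs its own nondegeneracy argument.
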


The table below illustrates all possible cases:
\begin{center}
\setlength{\tabcolsep}{1pt}
\begin{tabularx}{\textwidth}{|l|X|X|X|X|}
\hline
\diagbox{$f \in \HH_n^\ast$}{$f \in \HH_m^\ast$}
& $p_o = 0$
& $p_e = bp_o \neq 0$
& \makecell[l]{
$p_e \neq b p_o$,\\
$p_o \neq 0$
}
& $p \in \HH_m$
\\ \hline
$f_o = 0$
&$f_o \ast p_o =0$
&$f_o \ast p_o =0$
&$f_o \ast p_o =0$
&$f_o \ast p_o =0$ \\
\hline
$f_e = a f_o \neq 0$
&$f_o \ast p_o =0$
&$f_e \ast p_e = ab f_o \ast p_o$
&$\HH_k$
&$\HH_k$
\\ \hline
\makecell[l]{
$f_e \neq a f_o$,\\
$f_o \neq 0$
}
&$f_o \ast p_o =0$
&$\HH_k$
&$\HH_k$
&$\HH_k$
\\ \hline
$f \in \HH_n$
&$f_o \ast p_o =0$
&$\HH_k$
&$\HH_k$
& $\HH_k$ \\ \hline
\end{tabularx}
\end{center}
where $a,b>0$.

Observe that the Hadamard product of the quasi-stable polynomial
$f(x)=x^6+x^4=x^4(x^2+1)$ with a polynomial $g(x)=b_6x^6+\ldots+ b_1 x + b_0$ is given by
$(f*g)(x)=b_6x^6+b_4x^4.$ Quasi-stability of this product is equivalent to the quasi-stability of the Hadamard product of the polynomials $p(x)=x^2+1$ and $q(x)=b_6x^2+b_4$.
Therefore, throughout this paper, without loss of generality, we restrict our
attention to quasi-stable polynomials with a positive constant term, as
specified in the definition of $\HH_n^\star$ in the Introduction.

\section{Necessary conditions}\label{necessary}

When beginning research on any mathematical object, it is often reasonable to
consider its behavior in extreme cases. That is an exact idea behind the
Pólya-Schur Theorem \cite[Thm.~5.7.2]{rs2002}, where sequences that are
supposed to preserve the reality of polynomial zeros (sometimes called
\textbf{hyperbolicity}) are tested solely on polynomials of the form $(x+1)^n$,
whose zeros all coincide at the single point. Such so-called \textbf{multiplier
sequences} continue to attract interest (see \cite{cpk2011}, \cite{kv2024},
\cite{k2008}). More generally, there is ongoing research on operators preserving
not only the reality of the zeros but also the sectors or strips in which the
zeros of a polynomial lie. These operators are referred to as zero-strip-preserving
(or reducing) operators or zero-sector-preserving (or reducing) operators (see
\cite{c2015}, \cite{cfpsw2018}, \cite{ch2011}). Somewhat surprisingly, these
testing polynomials have remained essentially unchallenged. In our setting, in
addition to preserving the reality of zeros, we must also account for the
interlacing property of the even and odd parts of the polynomial. This
additional requirement naturally leads us to introduce the following class of
objects that are used throughout this paper.

Polynomials of the form
\begin{equation*}
Q^k(x)=(x^2+1)^l+x(x^2+1)^l \qquad \textrm{ for } k=2l+1,
\end{equation*}
\begin{equation*}
Q^k(x)=(x^2+1)^l \qquad \textrm{ for } k=2l,
\end{equation*}
are called \textbf{basic quasi-stable polynomials of degree $k$}, and
\begin{equation*}
Q_m^k(x)=x^mQ^k(x) \qquad  \textrm{ for }  k,m \in \NN, 
\end{equation*}
the \textbf{$m$-shifted basic quasi-stable polynomials of degree $k$}.

Note that the polynomials $Q^k$ defined above are special cases of $(iv)$ and
$(iii)$ in the Hermite–Biehler Theorem. Additionally, we define the family of
polynomials $Y_n$, intended to be the idealizer of the set $\HH_n$, as follows
\begin{equation*}
Y_n:= \left\{ g \in \RR_n^+ : \frac{g*Q_m^k}{x^m} \in \HH_k^\star \;\;
\forall k,m \in \NN, \textrm{ such that } k\geqslant 2, k+m \leqslant n
\right\}.
\end{equation*}
In other words, the family $Y_n$ consists of polynomials such that the Hadamard
product of their every k-term sum of consecutive monomials with the basic
quasi-stable polynomial belongs to $\HH_k^\star$, i.e., polynomials which
preserve the quasi-stability of every possible $Q^k_m$. We now show that the
set $Y_n$ satisfies the necessary condition for belonging to $X_n$.
\begin{theorem}\label{necessarythm}
Suppose that $m \leqslant n,\;g \in \RR_n^+$. The following implications hold
\begin{itemize}
\item[(i)]$f \ast g \in \HH_m \quad \forall f \in \HH_m \Rightarrow g \in Y_n,$
\item[(ii)]$f \ast g \in \HH_m^\star \quad \forall f \in \HH_m^\star \Rightarrow g \in Y_n.$
\end{itemize}
Consequently, for every $n \in \NN$, the idealizer $X_n$ is included in $Y_n$.
\end{theorem}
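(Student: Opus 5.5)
The plan is to approximate each shifted basic polynomial $Q^k_m$, with $k\geqslant 2$ and $k+m\leqslant n$, by polynomials of degree $k+m$ that lie in $\HH_{k+m}$ (for (i)) or in $\HH_{k+m}^\star$ (for (ii)). We multiply these approximants by $g$ using the hypothesis, and then pass to the limit using continuity of zeros. Since $k+m\leqslant n$, the degree $k+m$ is one of the admissible degrees in the hypotheses (which quantify over all degrees $\leqslant n$).

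For (i), fix $k=2l$ or $k=2l+1$ and $m$ as above, and for $\varepsilon>0$ put
\begin{equation*}
f_\varepsilon(x)=(x+\varepsilon)^m\,\bigl((x+\varepsilon)^2+1\bigr)^l\,(x+1)^{k-2l}.
\end{equation*}
All its zeros, namely $-\varepsilon$, $-\varepsilon\pm i$ and possibly $-1$, lie in the open left half-plane, and its coefficients are positive. Hence $f_\varepsilon\in\HH_{k+m}$, and by hypothesis $f_\varepsilon\ast g\in\HH_{k+m}$. As $\varepsilon\to 0$, the coefficients of $f_\varepsilon$ converge to those of $Q^k_m=x^m(x^2+1)^l(x+1)^{k-2l}$. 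Therefore $f_\varepsilon\ast g\to Q^k_m\ast g$ coefficientwise, and the leading coefficient $g_{k+m}$ stays positive, so the degree does not drop in the limit.

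By continuity of zeros (Hurwitz's theorem, or simply continuity of roots of monic polynomials of fixed degree), every zero of $Q^k_m\ast g$ is a limit of zeros of $f_\varepsilon\ast g$. Hence every zero of $Q^k_m\ast g$ has real part $\leqslant 0$. Now $Q^k_m\ast g$ has no monomials below $x^m$, so $h:=(g\ast Q^k_m)/x^m$ is a polynomial of degree $k$. Its zeros are among those of $Q^k_m\ast g$, so they lie in the closed left half-plane. Its coefficients are $g_{m+j}$ times the coefficients of $Q^k$, which are nonnegative. Its constant term $g_m$ and its leading coefficient $g_{m+k}$ are positive. Therefore $h\in\HH_k^\star$. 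Since $k,m$ were arbitrary with $k\geqslant 2$ and $k+m\leqslant n$, this gives $g\in Y_n$.

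For (ii) the argument is identical, but simpler: take $f_\varepsilon(x)=(x+\varepsilon)^m Q^k(x)$. Its coefficients are nonnegative, its constant and leading coefficients are positive, and its zeros $-\varepsilon$, $\pm i$ and possibly $-1$ have real part $\leqslant 0$. So $f_\varepsilon\in\HH_{k+m}^\star$, and therefore $f_\varepsilon\ast g\in\HH_{k+m}^\star$. The same limiting argument then gives $(g\ast Q^k_m)/x^m\in\HH_k^\star$. (One can even skip the approximation here by noting that $Q^k_m$ is quasi-stable apart from its zero at the origin, but the approximation keeps us within the convention $a_0>0$ fixed in the definition of $\HH^\star$.)

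The consequence $X_n\subset Y_n$ follows immediately from (i). Every $g\in X_n$ satisfies $g\ast\HH_{k+m}\subset\HH_{k+m}$ for all $k+m\leqslant n$, which is exactly what the argument above uses.

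The only delicate step is the limit passage. One must make sure that the degree of $f_\varepsilon\ast g$ is constant, which holds because $g_{k+m}>0$. One must also check that dividing by $x^m$ removes exactly the zeros at the origin created in the limit. This holds because the coefficients of $x^0,\dots,x^{m-1}$ vanish identically in the limit while the coefficient of $x^m$, namely $g_m$, stays positive, so we lose no information about the remaining $k$ zeros.
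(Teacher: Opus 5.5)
Your proof is correct and follows the same strategy as the paper: approximate each $Q^k_m$ by stable (respectively quasi-stable) polynomials, apply the hypothesis, and pass to the limit by continuity of zeros. The only difference is the approximants. Yours have exact degree $k+m$, so the leading coefficient $b_{k+m}$ of the product stays fixed and plain continuity of roots suffices, but you must use the hypothesis in every degree $k+m\leqslant n$, which is exactly what membership in $X_n$ gives. The paper instead uses a single family of degree-$n$ stable polynomials whose surplus zeros escape to infinity, so it needs only the degree-$n$ hypothesis; you could get this too by multiplying your $f_\varepsilon$ by $(1+\varepsilon x)^{n-k-m}$ and applying Hurwitz's theorem on the open right half-plane.
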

\begin{proof}
Let us set $\varepsilon,\mu,\alpha,\beta>0$ and construct
\begin{multline*}
Q_{\varepsilon,\mu,\alpha,\beta}(x)=\alpha\prod_{j=0}^{n_1} (j\mu x^2+1)^j
\prod_{j=0}^{n_2} (x^2+1+j\varepsilon)^j\prod_{j=0}^{n_3}(x^2+j\varepsilon)^j+\\
+\beta x\prod_{j=0}^{n_1} (j\varepsilon x^2+1)^j
\prod_{j=0}^{n_2}(x^2+1+j\mu)^j\prod_{j=0}^{n_3}(x^2+j\mu)^j
\end{multline*}
which for $\mu > \varepsilon > 0$, and by choosing appropriate $n_1, n_2, n_3$,
such that $n_1 + n_2 + n_3 = n$, are stable polynomials of degree $n$. By
letting $\varepsilon$ and $\mu$ tend to zero, they converge to any polynomial
$Q_m^k$ of degree $k+m \leqslant n$. To obtain those of even degree with the
appropriate shift, we take the limit as $\alpha$ or $\beta$ tends to zero.
Since we are looking for polynomials in $\RR_n^+$ such that their Hadamard
product with any stable polynomial is stable, it follows in particular that
their Hadamard product with any polynomial $Q_{\varepsilon,\mu,\alpha,\beta}$
must be stable. By the continuous dependence of the zeros on the coefficients,
we obtain that they too must belong to $Y_n$, which proves (i). For (ii), we
use the same argument, as $Q_{\varepsilon,\mu,\alpha,\beta}(x)$ also belongs
to $\HH_m^\star$.
\end{proof}
For lower degrees, the family $Y_n$ can be written in a much simpler form.
It is easy to verify that $Y_j=\RR_j^+$ for $j \leqslant 2$ and
$Y_3=\left\{g \in \RR_3^+ : g*Q^3 \in \HH_3^\star \right\}=\HH_3^\star$.
For $n=4$, let $g \in~\RR_4^+$,
\begin{equation*}
g(x)=b_4x^4 + b_3x^3 + b_2x^2 + b_1x + b_0.
\end{equation*}
The conditions $g*Q^2 \in \HH_2^\star, \;\frac{g*Q^2_1} {x} \in \HH_2^\star,\;
\frac{g*Q^2_2}{x^2} \in \HH_2^\star$ are satisfied, since they are second-degree
polynomials with nonnegative coefficients. The remaining conditions:
$g*Q^3 \in \HH_3^\star,\; \frac{g*Q^3_1}{x} \in\HH_3^\star,\;
g*Q^4\in \HH_4^\star,$ by Theorem \ref{agt} and the Hermite-Biehler Theorem,
are equivalent to the inequalities $b_1b_2-b_0b_3\geqslant0, \;
b_2b_3-b_1b_4\geqslant0, \;b_2^2-b_0b_4\geqslant0.$ We see that the last inequality follows from the previous ones. Hence
\begin{equation*}
Y_4=\left\{g \in \RR_4^+ : \frac{g\ast Q_i^3}{x^i} \in \HH_3^\star
\textrm{ for } i=0,1\right\}=\left\{g \in \RR_4^+,\;g(x)=\sum_{i=0}^4b_ix^i :
b_1b_2 \geqslant b_0b_3,\; b_2b_3 \geqslant b_1b_4\right\}.
\end{equation*}
We thus have the following conclusion.
\begin{corollary}\label{necessary4}
Suppose that $m \leqslant 4$, $g \in \RR_4^+$. The following implications hold
\begin{itemize}
\item[(i)]$f \ast g \in \HH_m \quad \forall f \in \HH_m \Rightarrow \frac{g\ast Q_i^3}{x^i} \in \HH_3^\star \textrm{ for } i=0,1,$
\item[(ii)]$f \ast g \in \HH_m^\star \quad \forall f \in\HH_m^\star \Rightarrow \frac{g\ast Q_i^3}{x^i} \in \HH_3^\star \textrm{ for } i=0,1.$
\end{itemize}
\end{corollary}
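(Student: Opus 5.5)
This corollary is Theorem \ref{necessarythm} specialized to $n=4$, combined with the description of $Y_4$ just derived. The plan is to chain three facts, with no new analytic input.

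First, apply Theorem \ref{necessarythm} with $n=4$. Under the hypothesis of (i), i.e.\ $f\ast g\in\HH_m$ for all $f\in\HH_m$ with $m\leqslant 4$, part (i) of that theorem gives $g\in Y_4$. Under the hypothesis of (ii), part (ii) gives $g\in Y_4$ in the same way.

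Second, unpack the definition of $Y_4$. Membership means $\frac{g\ast Q_m^k}{x^m}\in\HH_k^\star$ for all $k\geqslant 2$ and $k+m\leqslant 4$. In particular, taking $k=3$ and $m=0,1$ yields exactly $\frac{g\ast Q_i^3}{x^i}\in\HH_3^\star$ for $i=0,1$. This is already the desired conclusion. Strictly speaking, we only need the inclusion of $Y_4$ in the set on the right. The reverse equality established above, in which the $k=2$ and $k=4$ conditions are redundant, is not needed.

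Third, as a sanity check, I would verify the concrete form of the two conditions via Theorem \ref{agt} and the Hermite--Biehler Theorem:
\[
(g\ast Q^3)(x)=b_3x^3+b_2x^2+b_1x+b_0,
\]
and its quasi-stability amounts to $\Delta_2=b_2b_1-b_3b_0\geqslant 0$. Similarly,
\[
\frac{g\ast Q^3_1}{x}=b_4x^3+b_3x^2+b_2x+b_1
\]
is quasi-stable if and only if $b_3b_2-b_4b_1\geqslant 0$. Here one must check that the GCD condition of Theorem \ref{agt} is automatic when $\Delta_2=0$. In that case the even and odd parts $b_2x+b_0$ and $b_3x+b_1$ are proportional with a negative root, so the condition holds.

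There is no real obstacle. The only subtle point sits inside Theorem \ref{necessarythm} itself, namely the limiting argument showing that stable approximants converge to each $Q_m^k$, and that result is taken as given.
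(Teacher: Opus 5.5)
Your proposal is correct and follows the paper's own route: apply Theorem~\ref{necessarythm} with $n=4$ to get $g\in Y_4$, then read off the $k=3$, $m\in\{0,1\}$ conditions from the definition of $Y_4$. You are right that only this trivial inclusion is needed, and your explicit check of the two cubic conditions, including the GCD case $\Delta_2=0$, matches the paper's computation of $Y_4$.
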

To give a simpler version of $Y_5$, let us consider $g \in \RR_5^+,$ where
\begin{equation*}
g(x) = b_5x^5 + b_4x^4 + b_3x^3 + b_2x^2 + b_1x + b_0.
\end{equation*}
The condition $g*Q^5 \in \HH_5^\star$ ensures that all minors of the matrix
\begin{equation*}
H(g*Q^5)=\begin{bmatrix}
b_4 & 2b_2 & b_0 &0 &0 \\
b_5 & 2b_3 & b_1 &0 &0 \\
0& b_4 & 2b_2 & b_0 &0  \\
0& b_5 & 2b_3 & b_1 &0  \\
0& 0& b_4 & 2b_2 & b_0 \\
\end{bmatrix}
\end{equation*}
are nonnegative, as it is known that the Hurwitz matrix of stable polynomials
is totally nonnegative (see \cite{k1982}, as we approach a quasi-stable
polynomial in the limit, the inequalities hold). In particular, we obtain the
inequalities $b_1b_2-b_0b_3 \geqslant 0$ and $b_3b_4-b_2b_5 \geqslant 0,$
which are equivalent to the conditions $g*Q^3\in \HH_3^\star$ and
$\frac{g*Q^3_2}{x^2} \in \HH_3^\star$ by Theorem \ref{agt}. Given the fact that $g*Q^5(x)=g*Q^4(x)+g*Q^4_1(x)$, from the Hermite-Biehler Theorem we
conclude that $g*Q^4 \in \HH_4^\star$ and $\frac{g*Q^4_1}{x} \in \HH_4^\star$.
The only condition we are not able to obtain from quasi-stability of $g*Q^5$ is
$\frac{g*Q^3_1}{x} \in \HH_3^\star$. Hence
\begin{equation*}
Y_5=\left\{g \in \RR_5^+ : g * Q^5 \in \HH_5^\star,\;
\frac{g\ast Q_1^3}{x} \in \HH_3^\star \right\}.
\end{equation*}
\begin{corollary}\label{necessary5}
Suppose that $m \leqslant 5$, $g \in \RR_5^+$. The following implications hold
\begin{itemize}
\item[(i)]$f \ast g \in \mathcal{H}_m \quad \forall f \in \mathcal{H}_m \Rightarrow  g*Q^5 \in \HH_5^\star,\; \frac{g\ast Q_1^3}{x} \in \HH_3^\star,$
\item[(ii)]$f \ast g \in \HH_m^\star \quad \forall f \in \HH_m^\star \Rightarrow g*Q^5 \in \HH_5^\star,\; \frac{g\ast Q_1^3}{x} \in \HH_3^\star.$
\end{itemize}
\end{corollary}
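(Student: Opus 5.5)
The plan is to derive Corollary \ref{necessary5} directly from Theorem \ref{necessarythm}, combined with the reduction of $Y_5$ to the two displayed conditions carried out just above. Under either hypothesis (i) or (ii), Theorem \ref{necessarythm} gives $g \in Y_5$. By definition this means
\[
\frac{g*Q^k_m}{x^m} \in \HH_k^\star \quad \text{for all } k \geqslant 2 \text{ and } k+m \leqslant 5 .
\]
Specializing to $(k,m)=(5,0)$ gives $g*Q^5\in\HH_5^\star$, and specializing to $(k,m)=(3,1)$ gives $\frac{g*Q^3_1}{x}\in\HH_3^\star$. These are the two required conclusions, so at this level the corollary is immediate.

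For completeness, I would also confirm the set equality $Y_5=\{g : g*Q^5\in\HH_5^\star,\ \frac{g*Q_1^3}{x}\in\HH_3^\star\}$. This is not needed for the corollary, since only the inclusion $Y_5 \subset \{\ldots\}$ is used, and that inclusion is trivial by specialization. The reverse inclusion goes through the following steps.
\begin{itemize}
\item[1.] All conditions with $k=2$ hold automatically: each is a quadratic with nonnegative coefficients and positive extreme coefficients, hence lies in $\HH_2^\star$.
\item[2.] The two conditions $g*Q^3\in\HH_3^\star$ and $\frac{g*Q^3_2}{x^2}\in\HH_3^\star$ come from the total nonnegativity of $H(g*Q^5)$, passing to the limit from stable polynomials as in \cite{k1982}. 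This gives the $2\times2$ minors $b_1b_2-b_0b_3\geqslant0$ and $b_3b_4-b_2b_5\geqslant0$. Theorem \ref{agt} then turns these into the quasi-stability statements; for a cubic, the only nontrivial Hurwitz condition is $\Delta_2\geqslant 0$, together with the GCD condition.
\item[3.] The two conditions with $k=4$ follow from the decomposition $g*Q^5=g*Q^4+g*Q^4_1$. These two summands are exactly the even and odd parts of $g*Q^5$ (after the substitution $x^2\mapsto x$). Part (i) of the Hermite–Biehler Theorem shows that both have only nonpositive real zeros, which yields the two quasi-stability statements with $k=4$.
\end{itemize}

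The main obstacle, if one insists on the full equality, is Step 2. One must make sure that the GCD requirement of Theorem \ref{agt} is satisfied when $\Delta_2=0$, and that the limiting total-nonnegativity argument really covers quasi-stable $g*Q^5$. For the corollary as stated, however, none of this is required.
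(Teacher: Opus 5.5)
Your proof is correct and matches the paper's: the corollary is Theorem \ref{necessarythm} (giving $g\in Y_5$) combined with the description of $Y_5$, and, as you note, only the trivial inclusion obtained by specializing to $(k,m)=(5,0)$ and $(3,1)$ is actually needed. The reverse inclusion, which the paper works out and you sketch, matters only later for the identification $X_5=Y_5$ in Theorem \ref{main5}. Also, the GCD worry in your Step~2 is harmless: for a cubic with positive coefficients and $\Delta_2=0$, the even and odd parts are proportional linear polynomials sharing a single negative zero.
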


\section{Main results}\label{main}

In the paper \cite{bg2021}, for $n \geqslant 3$, the authors define the set
\begin{equation*}
\WW_n= \left\{ g(x)=\sum_{i=0}^nb_ix^i:
b_i b_{i-1} > b_{i-2} b_{i+1} \; (i=2,\ldots,n-1) \right\},
\end{equation*}
which determines the sufficient conditions for the so-called \textbf{generalized
Hadamard product} of a polynomial of degree $n \geqslant 4$ with any stable
polynomial of degree $m \leqslant 4$ to be stable. As a corollary, we present
a special case of this theorem in which we consider the classical Hadamard
product of polynomials of degree $4$ and $m$
(cf.\ \cite[Thm.~3.1]{bg2021}).

\begin{corollary}[Białas-Góra]
Suppose that $m \leqslant 4$. The following implication holds
\begin{equation*}
g \in \WW_4 \Rightarrow f \ast g \in \HH_m \qquad \forall f \in \HH_m.
\end{equation*}
\end{corollary}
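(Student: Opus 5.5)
Since $\HH_m$ is nested under the Hadamard product in a simple way for small $m$, I will split by the degree $m$ of $f$. For $m\leqslant 2$ there is nothing to prove: $f\ast g$ is a polynomial of degree $m$ with positive coefficients, and $\HH_1=\RR_1^+$, $\HH_2=\RR_2^+$. So the whole content lies in the cases $m=3$ and $m=4$. The plan is to use the Liénard–Chipart criterion in both cases. It reduces stability of $f\ast g$ to a single (for $m=3$) or two (for $m=4$) Hurwitz-minor inequalities, and I will verify these by multiplying the corresponding inequalities for $f$ with those defining $\WW_4$. Write $f(x)=\sum_{i=0}^m a_ix^i$ with all $a_i>0$, and $g(x)=\sum_{i=0}^4 b_ix^i\in\WW_4$. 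Here $\WW_4$ means $b_2b_1>b_0b_3$ and $b_3b_2>b_1b_4$, together with $b_i>0$; I read $g\in\RR_4^+$ into the definition.

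For $m=3$, stability of $f$ is equivalent to $a_2a_1>a_3a_0$. The product is $f\ast g=\sum_{i=0}^3 a_ib_ix^i$, which has positive coefficients, so by Liénard–Chipart it suffices that $\Delta_2(f\ast g)=a_2b_2\,a_1b_1-a_3b_3\,a_0b_0>0$. This follows at once from $a_2a_1>a_3a_0>0$ and $b_2b_1>b_0b_3>0$ by multiplying the two strict inequalities between positive numbers. Only the first $\WW_4$ inequality is used here, because $f$ only sees $b_0,\dots,b_3$.

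For $m=4$, by Liénard–Chipart (iii) with $n=4$ I need $\Delta_1>0$, which is automatic, and $\Delta_3>0$. For $h=\sum c_ix^i$ of degree 4 we have $\Delta_3(h)=c_3c_2c_1-c_4c_1^2-c_3^2c_0$. Stability of $f$ therefore means $a_1a_2a_3>a_4a_1^2+a_0a_3^2$. I will need
\[
a_1a_2a_3\,b_1b_2b_3 > a_4a_1^2\,b_4b_1^2+a_0a_3^2\,b_0b_3^2 .
\]
To get it, I write $b_1b_2b_3=b_1\cdot(b_2b_3)$ and use $b_2b_3>b_1b_4$ to get $b_1b_2b_3>b_1^2b_4$. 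Similarly $b_1b_2b_3=b_3(b_1b_2)>b_3^2b_0$. So $B:=b_1b_2b_3$ dominates both $b_4b_1^2$ and $b_0b_3^2$. Then
\[
a_1a_2a_3B > (a_4a_1^2+a_0a_3^2)B \geqslant a_4a_1^2\,b_4b_1^2+a_0a_3^2\,b_0b_3^2 ,
\]
and the second inequality is strict, which gives $\Delta_3(f\ast g)>0$.

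I expect the only real obstacle to be the $m=4$ case. The two negative terms of $\Delta_3$ carry different monomials in $b$, so a naive termwise product does not work. The key step is the observation that both of those monomials are bounded by the single quantity $b_1b_2b_3$, each via one of the two defining inequalities of $\WW_4$. Once this is in place, the rest is routine.
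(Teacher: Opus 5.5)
Your proof is correct and is essentially the paper's argument, which the paper gives for the closure $\overline{\WW_4}$ in Theorem~\ref{sufficient4bg}. For $m=4$ it uses exactly your estimate $\Delta_3(f\ast g)\geqslant b_1b_2b_3\,\Delta_3(f)>0$, obtained by bounding both $b_1^2b_4$ and $b_0b_3^2$ by $b_1b_2b_3$; the only difference is at $m=3$, where the paper applies the Garloff--Wagner Theorem to the stable cubic truncation of $g$, while you multiply the two $\Delta_2$ inequalities directly, which is equally valid and more elementary.
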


However, the above implication cannot be reversed. No matter how we choose
$f \in \HH_4$, for $g(x)=x^4+x^3+x^2+x+1$, their product $f*g \in \HH_4$,
but $g \notin \WW_4$. Therefore, we need another family of polynomials so that
the estimates used in the proof can be slightly improved. For $n\geqslant 3$,
let us consider the closure of $\WW_n$, i.e.,
\begin{equation*}
\overline{\WW_n}= \left\{g \in \RR_4^+,\;g(x)=\sum_{i=0}^nb_ix^i:
b_ib_{i-1} \geqslant b_{i-2} b_{i+1} \; (i=2,\ldots,n-1) \right\},
\end{equation*}
thanks to which we are able to obtain the following result.

\begin{theorem}\label{sufficient4bg}
Suppose that $m \leqslant 4$. The following implications hold
\begin{itemize}
\item[(i)]$g \in \overline{\WW_4} \Rightarrow f \ast g \in \HH_m \quad \forall f \in \HH_m,$
\item[(ii)]$g \in \overline{\WW_4} \Rightarrow f \ast g \in \mathcal{H}_m^\star \quad \forall f \in\HH_m^\star,$
\end{itemize}
therefore, $\overline{\WW_4}$ is included in $X_4$.
\end{theorem}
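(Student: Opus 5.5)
We prove the statement by reduction to the Białas–Góra Corollary via approximation, together with a direct low-degree check. Fix $g\in\overline{\WW_4}$ with coefficients $b_0,\dots,b_4>0$ satisfying $b_1b_2\geqslant b_0b_3$ and $b_2b_3\geqslant b_1b_4$.

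For $m\leqslant 2$ there is nothing to prove, since $\HH_m=\RR_m^+$ and $f\ast g$ again has positive coefficients. In the quasi-stable case, $\HH_m^\star$ for $m\leqslant2$ consists of polynomials with nonnegative coefficients and $a_0,a_m>0$, which is also preserved. For $m=3$, write $f=a_3x^3+a_2x^2+a_1x+a_0\in\HH_3$. Stability of $f\ast g$ is equivalent to $a_1a_2b_1b_2>a_0a_3b_0b_3$. This follows from $a_1a_2>a_0a_3$ and $b_1b_2\geqslant b_0b_3$. If $f\in\HH_3^\star$, the non-strict inequality $\Delta_2(f\ast g)\geqslant0$ follows in the same way. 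Quasi-stability of $f\ast g$ is then a direct check with Theorem~\ref{agt}. When $\Delta_2(f\ast g)=0$, we must have $a_1a_2=a_0a_3$ and either $a_1=a_2=0$ or $b_1b_2=b_0b_3$; in both cases $\GCD$ of the even and odd parts is linear with a negative zero. So the whole difficulty sits in $m=4$.

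For $m=4$ and part (i), I would argue by perturbation. Set $g_t(x)=\sum b_i t^{-i^2}x^i$ for $t>1$, or more simply multiply $b_1,b_2$ by $(1+\delta)$ and $b_2$ by a further $(1+\delta)$. The goal is to make both inequalities strict, so that $g_\delta\in\WW_4$ and $g_\delta\to g$. The scaling $b_i\mapsto b_i c^{i(i-1)/2}$ with $c>1$ is better here: it changes $b_{i}b_{i-1}/(b_{i-2}b_{i+1})$ by the factor $c^{2}$, so it lands in $\WW_4$. By the Białas–Góra Corollary, $f\ast g_c\in\HH_4$ for every $f\in\HH_4$. Letting $c\to1$ gives only $f\ast g\in\HH_4^\star$, so strictness has to be recovered separately.

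I expect recovering strictness to be the main obstacle. I would do it using Liénard–Chipart, or by computing $\Delta_3(f\ast g)$ directly. For $m=4$ we have
\[
\Delta_3=a_1a_2a_3\,b_1b_2b_3-a_0a_3^2\,b_0b_3^2-a_1^2a_4\,b_1^2b_4 .
\]
Set $u=b_0b_3/(b_1b_2)\leqslant1$ and $v=b_1b_4/(b_2b_3)\leqslant1$. Then
\[
\Delta_3(f\ast g)=b_1b_2b_3\bigl(a_1a_2a_3-u\,a_0a_3^2-v\,a_1^2a_4\bigr)\geqslant b_1b_2b_3\,\Delta_3(f)>0,
\]
which together with $\Delta_1>0$ gives stability by Liénard–Chipart. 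This direct computation actually makes the approximation step unnecessary for (i). The argument for $m=3$ is the same.

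For (ii), $f\in\HH_4^\star$, the same identity yields $\Delta_3(f\ast g)\geqslant0$, and also $\Delta_1,\Delta_2\geqslant0$ by the analogous factorization of $\Delta_2=a_3a_2b_3b_2-a_4a_1b_4b_1$. The delicate part is the degenerate case with equalities. There I would use Theorem~\ref{agt}. One has to check that the number of positive leading minors of $f\ast g$ matches a stability index and that $\GCD((f\ast g)_e,(f\ast g)_o)$ has only negative zeros. I would split according to the Garloff–Wagner table, using the types of $f$ ($f_o=0$, $f_e=af_o$, or generic). For generic $f$, strict inequalities for $f$ propagate. In the boundary cases, $\Delta_3(f)=0$ forces either $f$ to have a pair of imaginary zeros, or $g$'s inequality to be an equality with the relevant term of $f$ vanishing. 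One then computes the GCD explicitly, a quadratic in $x^2$ whose zeros are negative real numbers.

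Finally, the inclusion $\overline{\WW_4}\subset X_4$ follows from (i) and the definition of $X_4$.
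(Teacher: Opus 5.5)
\textbf{Part (i) and $m\leqslant 3$.} Your argument for (i) is the paper's. You write
$\Delta_3(f\ast g)=b_1b_2b_3\bigl(a_1a_2a_3-u\,a_0a_3^2-v\,a_1^2a_4\bigr)\geqslant b_1b_2b_3\,\Delta_3(f)$ with $u,v\leqslant 1$ and then apply Li\'enard--Chipart. This is exactly case (1) of the paper's proof. For $m=3$, the paper instead observes that the truncation $b_3x^3+b_2x^2+b_1x+b_0$ lies in $\HH_3^\star$ and applies Garloff--Wagner, which is equivalent to your direct check.

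You can drop the approximation detour entirely. As written it also contains a sign slip: $b_i\mapsto b_ic^{i(i-1)/2}$ multiplies $b_ib_{i-1}/(b_{i-2}b_{i+1})$ by $c^{-2}$, so you would need $c<1$. (Your first choice, $b_it^{-i^2}$ with $t>1$, does work.)

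\textbf{Part (ii): the gap.} You only promise the degenerate cases, and that is where the remaining content is. For $f\in\HH_4^\star$ with $a_0>0$, the stability index is $4$, $2$ or $0$. The type $f_e=af_o$ cannot occur in degree $4$, since $\deg f_e=2>\deg f_o$.
\begin{itemize}
\item \emph{Index 2.} Here $\Delta_2(f\ast g)\geqslant b_2b_3\Delta_2(f)>0$. If $\Delta_3(f\ast g)=0$, then $(f\ast g)_o=a_3b_3x+a_1b_1$ is linear. So the GCD is the linear factor $x+\frac{a_1b_1}{a_3b_3}$, not ``a quadratic in $x^2$'', and its zero is automatically negative.
\item \emph{Index 0.} Here $a_1=a_3=0$ and all Hurwitz minors of $f\ast g$ vanish. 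Theorem~\ref{agt} then requires $(f\ast g)_e=a_4b_4x^2+a_2b_2x+a_0b_0$ itself to have real (hence negative) zeros, i.e.\ $a_2^2b_2^2\geqslant 4a_0a_4b_0b_4$. This does not come from strict inequalities propagating, nor from a GCD computation.
\end{itemize}
For the index-0 case you need $b_2^2\geqslant b_0b_4$. This is not one of the defining inequalities of $\overline{\WW_4}$. It follows by multiplying $b_1b_2\geqslant b_0b_3$ and $b_2b_3\geqslant b_1b_4$ and dividing by $b_1b_3$. Combine it with $a_2^2\geqslant 4a_0a_4$, which holds because $f_e$ has real zeros.

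With these two points supplied, your outline becomes the paper's proof.
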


\begin{proof}
If $m=1,2$, then the implication is obvious. For $m = 3$, it is an immediate
consequence of the Garloff-Wagner Theorem, as all of the polynomials from
$\overline{W}_3$ are quasi-stable. Hence, we consider only $m=4$. Let
$f(x)=a_4x^4+a_3x^3+a_2x^2+a_1x+a_0$ and
$g(x)=b_4x^4+b_3x^3+b_2x^2+b_1x+b_0$ be such, that $f \in \HH_4^\star$,
$g\in \overline{\WW_4}$. Let us examine the following cases:
\begin{itemize}
\item[(1)] If the stability index of $f$ is 4, then $f \in \HH_4$, and by the Liénard–Chipart criterion, it suffices to show that
\begin{equation*}
\Delta_3(f*g)=\det\begin{bmatrix}
a_3b_3 & a_1b_1 & 0 \\
a_4b_4 & a_2b_2 & a_0b_0 \\
0 & a_3b_3 & a_1b_1
\end{bmatrix}>0.
\end{equation*}
Hence
\begin{multline*}
\det\begin{bmatrix}
a_3b_3 & a_1b_1 & 0 \\
a_4b_4 & a_2b_2 & a_0b_0 \\
0 & a_3b_3 & a_1b_1
\end{bmatrix}=  a_1a_2a_3b_1b_2b_3-a_0a_3^2b_0b_3^2 - a_1^2a_4b_1^2b_4
\geqslant\\ \geqslant b_1b_2b_3(a_1a_2a_3-a_0a_3^2-a_1^2a_4)
= b_1b_2b_3 \Delta_3(f)> 0,
\end{multline*}
which proves (i).
\item[(2)]If the stability index of $f$ is 2, then by Theorem \ref{agt} it
suffices to show that
\begin{equation*}
\Delta_2(f*g)=\det\begin{bmatrix}
a_3b_3 & a_1b_1 \\
a_4b_4 & a_2b_2 \\
\end{bmatrix} > 0,
\end{equation*}
\begin{equation*}
\Delta_3(f*g)=\det\begin{bmatrix}
a_3b_3 & a_1b_1 & 0 \\
a_4b_4 & a_2b_2 & a_0b_0 \\
0 & a_3b_3 & a_1b_1
\end{bmatrix} \geqslant 0,
\end{equation*}
and if $\Delta_3(f*g)=0$, then additionally that
$\GCD((f*g)_e,(f*g)_o)$ has only negative zeros.
So we have
\begin{equation*}
\det\begin{bmatrix}
a_3b_3 & a_1b_1 \\
a_4b_4 & a_2b_2 \\
\end{bmatrix} = a_2a_3b_2b_3 -a_1a_4b_1b_4\geqslant b_2b_3 \Delta_2(f) > 0
\end{equation*}
\begin{multline*}
\det\begin{bmatrix}
a_3b_3 & a_1b_1 & 0 \\
a_4b_4 & a_2b_2 & a_0b_0 \\
0 & a_3b_3 & a_1b_1
\end{bmatrix}=  a_1a_2a_3b_1b_2b_3-a_0a_3^2b_0b_3^2 - a_1^2a_4b_1^2b_4
\geqslant\\ \geqslant b_1b_2b_3(a_1a_2a_3-a_0a_3^2-a_1^2a_4)
= b_1b_2b_3 \Delta_3(f)\geqslant 0
\end{multline*}
if the equality holds, as
\begin{equation*}
(f*g)_o(x_0)=0 \Leftrightarrow x_0=-\frac{a_1b_1}{a_3b_3}
\end{equation*}
and
\begin{equation*}
(f*g)_e(x_0)=-\frac{\Delta_3(f*g)}{a_3^2b_3^2}=0
\end{equation*}
we have that $\GCD((f*g)_e,(f*g)_o)= x+\frac{a_1b_1}{a_3b_3}$.
\item[(3)]If the stability index of $f$ is 0, then by the Hermite–Biehler
Theorem, $a_3 = a_1 = 0$, so $(f*g)(x)=a_4b_4x^4+a_2b_2x^2+a_0b_0$. As
$b_2^2 \geqslant b_0 b_4$, then $a_2b_2^2 \geqslant 4a_0b_4b_0 b_4$ and
$(f*g)_e$ has only negative zeros which proves (ii).
\end{itemize}
\end{proof}

In a private correspondence with the authors of \cite{bg2021}, Góra inquired
whether if the set $\WW_5$ might also determine sufficient conditions for
polynomials of degree 5. However, as Example \ref{ex1} (see Section \ref{exes})
shows, the implication
\begin{equation*}
g \in \WW_5 \Rightarrow f \ast g \in \HH_5 \qquad \forall f \in \HH_5
\end{equation*}
is not true.

Nevertheless, as we have shown in the previous section, the inequalities
satisfied by the polynomials belonging to $\overline{\WW_4}$ coincide with the
necessary conditions from Corollary \ref{necessary4}. We can therefore combine
it with Theorem \ref{sufficient4bg} and obtain the following.

\begin{theorem}\label{main4}
Suppose that $m \leqslant 4,\; g \in \RR_4^+$. The following equivalences hold
\begin{itemize}
\item[(i)]$ f \ast g \in \HH_m \quad \forall f \in \HH_m \Leftrightarrow \frac{g\ast Q_i^3}{x^i} \in \HH_3^\star \textrm{ for } i=0,1,$
\item[(ii)]$ f \ast g \in \HH_m^\star \quad \forall f \in \HH_m^\star  \Leftrightarrow \frac{g\ast Q_i^3}{x^i} \in \HH_3^\star \textrm{ for } i=0,1.$
\end{itemize}
Hence, $X_4=Y_4=\overline{\WW_4}$.
\end{theorem}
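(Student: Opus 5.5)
The statement combines the two directions already established, so the plan is to assemble them and then check that the three sets coincide as subsets of $\RR_4^+$.

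For the forward implications in (i) and (ii), I will invoke Corollary \ref{necessary4} directly. That corollary is the degree-four specialization of Theorem \ref{necessarythm}, in which the family $Y_4$ was rewritten as the two conditions $\frac{g\ast Q_i^3}{x^i}\in\HH_3^\star$ for $i=0,1$. If $g$ preserves stability, or quasi-stability, for every $m\leqslant 4$, then both conditions hold.

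For the reverse implications, I will rewrite the two conditions as inequalities on the coefficients of $g(x)=\sum_{i=0}^4 b_ix^i$. By Theorem \ref{agt} and the Hermite--Biehler Theorem, as computed just before Corollary \ref{necessary4}, they are equivalent to
\begin{equation*}
b_1b_2\geqslant b_0b_3 \quad\textrm{and}\quad b_2b_3\geqslant b_1b_4 .
\end{equation*}
These are exactly the defining inequalities of $\overline{\WW_4}$, namely $b_ib_{i-1}\geqslant b_{i-2}b_{i+1}$ for $i=2,3$. So $g\in\overline{\WW_4}$, and Theorem \ref{sufficient4bg}(i) and (ii) give $f\ast g\in\HH_m$ for all $f\in\HH_m$, respectively $f\ast g\in\HH_m^\star$ for all $f\in\HH_m^\star$, for every $m\leqslant 4$.

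To conclude $X_4=Y_4=\overline{\WW_4}$, I will use the same inequality description. The displayed description of $Y_4$ shows $Y_4=\overline{\WW_4}$. The equivalence in (i), read over all $m\leqslant 4$, is by definition the statement $X_4=Y_4$.

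The only point requiring care is the equivalence between the two quasi-stability conditions and the two inequalities; the paper uses it without details, so I will verify it. For $g\ast Q^3=b_3x^3+b_2x^2+b_1x+b_0$, quasi-stability requires $\Delta_2=b_2b_1-b_3b_0\geqslant 0$.

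Conversely, suppose $\Delta_2\geqslant 0$. If $\Delta_2>0$, the Routh--Hurwitz criterion gives stability. If $\Delta_2=0$, the polynomial factors as $(b_3x^2+b_1)(x+b_0/b_1)$, whose zeros are purely imaginary apart from one negative zero, so it lies in $\HH_3^\star$. The same argument applied to the coefficients $b_1,\dots,b_4$ handles $\frac{g\ast Q^3_1}{x}$.

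With this verified, the proof reduces to citing Corollary \ref{necessary4} and Theorem \ref{sufficient4bg}.
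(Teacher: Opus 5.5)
Your proposal is correct and follows the paper's own route: necessity from Corollary \ref{necessary4}; the identification of the two conditions with $b_1b_2\geqslant b_0b_3$ and $b_2b_3\geqslant b_1b_4$, which gives $Y_4=\overline{\WW_4}$; and sufficiency from Theorem \ref{sufficient4bg}. Your explicit factorization check in the case $\Delta_2=0$ simply spells out what the paper leaves to Theorem \ref{agt} and the Hermite--Biehler Theorem.
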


In this paper, we extend this result and show that the necessary conditions we
defined in Section \ref{necessary} are also sufficient for $n=5$.

\begin{theorem}\label{main5}
Suppose that $m \leqslant 5,\;g \in \RR_5^+$. The following equivalences hold
\begin{itemize}
\item[(i)]$ f \ast g \in \HH_m \quad \forall f \in \HH_m \Leftrightarrow  g*Q^5 \in \HH_5^\star,\; \frac{g\ast Q_1^3}{x} \in \HH_3^\star,$
\item[(ii)]$ f \ast g \in \HH_m^\star \quad \forall f \in \HH^\star_m \Leftrightarrow g*Q^5 \in \HH_5^\star,\; \frac{g\ast Q_1^3}{x}\in \HH_3^\star.$
\end{itemize}
Thus, $X_5=Y_5$.
\end{theorem}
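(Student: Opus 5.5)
The necessity direction ($\Rightarrow$) in both (i) and (ii) is exactly Corollary~\ref{necessary5}, so the work is sufficiency. Assume $g*Q^5\in\HH_5^\star$ and $\frac{g*Q^3_1}{x}\in\HH_3^\star$. By the discussion preceding Corollary~\ref{necessary5}, total nonnegativity of $H(g*Q^5)$ gives
\[
b_1b_2\geqslant b_0b_3,\qquad b_3b_4\geqslant b_2b_5 .
\]
The second hypothesis, via Theorem~\ref{agt}, gives $b_2b_3\geqslant b_1b_4$. So $g\in\overline{\WW_5}$. In addition, $g*Q^5\in\HH_5^\star$ supplies the higher minors of $H(g*Q^5)$: the determinants
\[
\Delta_3(g*Q^5),\qquad \Delta_4(g*Q^5)\geqslant 0,
\]
i.e.\ explicit quadratic/cubic inequalities in the $b_i$ that are strictly stronger than $\overline{\WW_5}$, as Example~\ref{ex1} requires. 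For $m\leqslant 4$, each $m$-term window of consecutive coefficients of $g$ satisfies the $\overline{\WW_4}$ inequalities. Hence the claim follows from Theorem~\ref{sufficient4bg}, after dividing by $x^i$ when the low coefficients are not used, which just means restricting $g$ to the relevant coefficients $b_0,\dots,b_m$.

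The core case is $m=5$. For (i), take $f=\sum a_ix^i\in\HH_5$. By Liénard–Chipart it suffices to show $\Delta_2(f*g)>0$ and $\Delta_4(f*g)>0$. For $\Delta_2$ I would argue as in Theorem~\ref{sufficient4bg}:
\[
a_4a_3b_4b_3-a_5a_2b_5b_2\geqslant b_2b_5\,\Delta_2(f)>0 .
\]
For $\Delta_4(f*g)$, I would expand it as a polynomial in the $a_i$ whose coefficients are monomials in the $b_i$, and try to write it as a nonnegative combination of terms. Each term should be either (a) a positive multiple of $\Delta_4(f)$ times a product of the $b$'s, or (b) a product of nonnegative Hurwitz minors of $f$ (which is totally nonnegative by Kemperman) times one of the nonnegative $b$-expressions: $b_ib_{i-1}-b_{i-2}b_{i+1}$, $\Delta_3(g*Q^5)$, $\Delta_4(g*Q^5)$. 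The natural scaling is to compare with $b_1b_2b_3b_4\,\Delta_4(f)$, bounding the error terms using the three $\overline{\WW_5}$ inequalities, and using $\Delta_4(g*Q^5)$ for the single cross term where the $\overline{\WW}$ bounds fail (the one responsible for Example~\ref{ex1}). A useful alternative I would try in parallel is a reduction: factor $f\in\HH_5$ through products of stable linear and quadratic factors, or parametrize by the Hurwitz/Routh continued fraction with $f_o\prec f_e$. That reduces the positivity check to a lower-dimensional family where the extremal $f$ approach $Q^5$ and $Q^3_1$-type polynomials, matching the hypotheses.

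For (ii), I would proceed as in Theorem~\ref{sufficient4bg}, splitting by the stability index of $f\in\HH_5^\star$. Index $5$ is part (i). For index $3$ and $1$, Theorem~\ref{agt} needs the strict lower minors (obtained as above), the weak higher minors (which follow by continuity from the same identity), and a check that in the equality case $\GCD((f*g)_e,(f*g)_o)$ has only negative zeros. That check is done by computing the common root explicitly, as in case (2) of Theorem~\ref{sufficient4bg}, or by invoking the Garloff–Wagner table for the degenerate cases (iii)/(iv) of Hermite–Biehler, where $f$ is essentially of the form $Q^k_m$ up to scaling of roots and the hypothesis on $g$ applies directly. The main obstacle is the $\Delta_4(f*g)>0$ inequality: finding the right sum-of-nonnegative-terms decomposition that uses $\Delta_4(g*Q^5)\geqslant0$, not merely $\overline{\WW_5}$. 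I expect this to require a careful case split on which $b$-inequality is tight.
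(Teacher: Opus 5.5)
Your preliminary reductions are sound. Necessity is Corollary~\ref{necessary5}. The cases $m\leqslant 4$ reduce to Theorem~\ref{main4}, because $b_0,\dots,b_m$ satisfy the $\overline{\WW_m}$ inequalities. Your bound $\Delta_2(f*g)\geqslant b_2b_5\,\Delta_2(f)$ is correct.

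The gap is that the substance of the theorem is never proved: for $m=5$ you need $\Delta_4(f*g)\geqslant 0$, strictly for stable $f$. You describe a search for a decomposition of $\Delta_4(f*g)$ into terms of types (a) and (b), but you exhibit no such identity and give no reason one exists. Since $\Delta_4(f*g)$ is bihomogeneous of degree $(4,4)$ in $(a,b)$, and the hypotheses $\Delta_4(f)\geqslant0$, $\Delta_4(g*Q^5)\geqslant0$ are genuinely nonlinear, it is not clear that a certificate of that restricted shape exists. The same applies to your alternatives via factorization or continued fractions. It also applies to part (ii): there the ``weak higher minors by continuity from the same identity'' presuppose the missing identity. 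Finally, the argument must use $b_2b_3\geqslant b_1b_4$ essentially even for $m=5$: otherwise $g*Q^3_1/x$ is unstable, and stable quintics $f$ approximating $Q^3_1$ give unstable $f*g$. Your plan does not say where this condition would enter.

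The missing idea is a normalization under which the Hadamard product becomes multiplicative.

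\textbf{The invariants.} Set $A=\frac{a_1a_4}{a_2a_3}$, $B=\frac{a_1a_5}{a_3^2}$, $C=\frac{a_0a_4}{a_2^2}$. Then $\Delta_4(f)/(a_1a_2a_3a_4)=\frac{A(A-B)(A-C)-(A^2-BC)^2}{A^3}$, so $\Delta_4(f)\geqslant 0$ if and only if $(A^2-BC)^2\leqslant A(A-B)(A-C)$.

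\textbf{Translating the hypotheses.} Define $X,Y,Z$ likewise for $g$. The polynomial $g*Q^5=b_5x^5+b_4x^4+2b_3x^3+2b_2x^2+b_1x+b_0$ has invariants $X/4,Y/4,Z/4$. Hence the hypotheses on $g$ become $(X^2-YZ)^2\leqslant 4X(X-Y)(X-Z)$ and $X\leqslant 1$, the latter being the $Q^3_1$ condition. The product $f*g$ has invariants simply $AX,BY,CZ$.

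\textbf{Solving for intervals.} Solving the quartic conditions turns them into $A\in[t_1(B,C),s_1(B,C)]$ and $X\in[t_4(Y,Z),1]$ (Lemmas~\ref{rownowaznosc1} and~\ref{rownowaznosc2}). These lemmas also absorb the GCD condition needed for (ii).

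\textbf{Upper bound.} This is just monotonicity: $AX\leqslant A\leqslant s_1(B,C)\leqslant s_1(BY,CZ)$.

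\textbf{Lower bound.} This reduces to the scalar inequality $t_1(BY,CZ)\leqslant t_1(B,C)\,t_4(Y,Z)$. The paper proves it by a case split on the orderings of $B,C$, of $Y,Z$, and of $BY,CZ$. It uses the monotonicity of ratios such as $\frac{1-\sqrt{1-at}}{1-\sqrt{1-t}}$ from Lemma~\ref{funkcje}.
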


To prove the above statement, we need several lemmas. Let us consider
\begin{equation}\label{ff}
f(x)= a_5x^5 + a_4x^4 + a_3x^3 + a_2x^2 + a_1x + a_0
\end{equation}
\begin{equation}\label{gg}
g(x)= b_5x^5 + b_4x^4 + b_3x^3 + b_2x^2 + b_1x + b_0
\end{equation}

\begin{lemma}\label{rownowaznosc1}
For $f \in \RR_5^+$ as in \eqref{ff} let us define
\[ A:=\frac{a_1 a_4}{a_2 a_3},\ \ \ \  B:=\frac{a_1 a_5}{a_3^2},
   \ \ \ \  C:=\frac{a_0 a_4}{a_2^2}.\]
Then the following conditions are equivalent:
\begin{itemize}
\item[(i)] $f \in \HH_5^\star$,
\item[(ii)] $\Delta_2(f)=a_3a_4-a_2a_5\geqslant0$,\\
$\Delta_4(f)=(a_3a_4-a_2a_5)(a_1a_2-a_0a_3) - (a_1a_4-a_0a_5)^2 \geqslant 0$,\\
and $\GCD(f_e,f_o)$ has only negative zeros,
\item[(iii)] \begin{equation*}
A \in (0,1], \; B \in \left(0,\tfrac{1}{4}\right],\;
C \in \left(0,\tfrac{1}{4}\right], \; A\geqslant B, \; A \geqslant C ,\;
\frac{(A^2-BC)^2}{A(A-B)(A-C)} \leqslant 1,
\end{equation*}
\item[(iv)] 
\begin{equation*}
A \in (0,1], \; B \in \left(0,\tfrac{1}{4}\right],\;
C \in \left(0,\tfrac{1}{4}\right], \; A \geqslant B, \; A \geqslant C ,\;
A \in [t_1(B,C),s_1(B,C)],
\end{equation*}
\end{itemize}
where 
\begin{equation*}
t_1(u,v)=\max \left\{ \tfrac{1}{4}(1+\sqrt{1-4u})(1-\sqrt{1-4v}) ,
\tfrac{1}{4}(1-\sqrt{1-4u})(1+\sqrt{1-4v}) \right\},
\end{equation*}
\begin{equation*}
s_1(u,v)=\tfrac{1}{4}(1+\sqrt{1-4u})(1+\sqrt{1-4v}).
\end{equation*}
As a special case, the following equivalences also hold:
\begin{itemize}
\item[(i)] $f \in \HH_5$,
\item[(ii)] $\Delta_2(f)=a_3a_4-a_2a_5>0$,\\
$\Delta_4(f)=(a_3a_4-a_2a_5)(a_1a_2-a_0a_3) - (a_1a_4-a_0a_5)^2 > 0$,
\item[(iii)] 
\begin{equation*}
A \in (0,1), \; B \in \left(0,\tfrac{1}{4}\right),\;
C \in \left(0,\tfrac{1}{4}\right), \; A> B, \; A > C ,\;
\frac{(A^2-BC)^2}{A(A-B)(A-C)} < 1,
\end{equation*}
\item[(iv)] 
\begin{equation*}
A \in (0,1), \; B \in \left(0,\tfrac{1}{4}\right),\;
C \in \left(0,\tfrac{1}{4}\right), \; A > B, \; A > C ,\;
A \in (t_1(B,C),s_1(B,C)).
\end{equation*}
\end{itemize}
\end{lemma}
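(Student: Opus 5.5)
The plan is to prove the chain (i)$\Leftrightarrow$(ii)$\Leftrightarrow$(iii)$\Leftrightarrow$(iv) for the quasi-stable statement, and then observe that the strict version follows by the same computations with every weak inequality made strict.

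\textbf{Step 1: (i)$\Leftrightarrow$(ii).} For the strict version this is the Liénard–Chipart Theorem with $n=5$, criterion (ii): $f\in\HH_5$ iff $\Delta_2(f)>0$ and $\Delta_4(f)>0$. Expanding the $4\times 4$ Hurwitz determinant gives the stated formula for $\Delta_4$.

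For the quasi-stable version I use Theorem \ref{agt}, together with the identities
\[\Delta_3=a_2\Delta_2-a_4(a_1a_4-a_0a_5),\qquad \Delta_4=\Delta_2(a_1a_2-a_0a_3)-(a_1a_4-a_0a_5)^2,\qquad \Delta_5=a_0\Delta_4 .\]
The direction (i)$\Rightarrow$(ii) comes from the weak inequalities that Theorem \ref{agt} provides. For the converse I would determine the stability index $m$ as follows.
\begin{itemize}
\item[(a)] If $\Delta_4>0$, then $\Delta_2>0$ is forced, since otherwise $\Delta_4\le -(a_1a_4-a_0a_5)^2\le0$. Liénard–Chipart then applies.
\item[(b)] If $\Delta_4=0$ and $\Delta_2>0$, I would show $\Delta_3>0$ from the identities above. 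This gives index $m=3$, and the hypothesis on $\GCD(f_e,f_o)$ finishes the case via Theorem \ref{agt}.
\item[(c)] If $\Delta_2=0$, then $\Delta_4\ge0$ forces $a_1a_4=a_0a_5$. In this case $f_e$ and $f_o$ become proportional, or one of them has a common factor with the other, and Hermite–Biehler (iii)/(iv) applies directly.
\end{itemize}
I expect this degenerate bookkeeping to be the main obstacle. It includes vanishing coefficients $a_1,a_2,a_3,a_4$, for which $A,B,C$ are undefined. For (iii)/(iv) I will treat such coefficients as excluded, or as limiting cases via Hermite–Biehler. Throughout, I use that a quasi-stable $f$ with $a_0,a_5>0$ and $f_o\prec f_e$ has all coefficients positive unless it falls into case (iii) or (iv) of the Hermite–Biehler Theorem.

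\textbf{Step 2: (ii)$\Leftrightarrow$(iii).} This is a normalization. Dividing by $a_3a_4$, the condition $\Delta_2\ge0$ becomes $B/A\le1$, i.e.\ $A\ge B$. Dividing $\Delta_4$ by $a_1a_2a_3a_4$, and using
\[\frac{a_0a_3}{a_1a_2}=\frac CA,\qquad \frac{a_0a_5}{a_1a_4}=\frac{BC}{A^2},\qquad \frac{(a_1a_4)^2}{a_1a_2a_3a_4}=A,\]
the condition $\Delta_4\ge0$ becomes
\[\Bigl(1-\tfrac BA\Bigr)\Bigl(1-\tfrac CA\Bigr)\ge A\Bigl(1-\tfrac{BC}{A^2}\Bigr)^2 .\]
This is equivalent to $(A^2-BC)^2\le A(A-B)(A-C)$.

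The remaining constraints come from quasi-stability, obtained either via (i) or from total nonnegativity of the Hurwitz matrix, a limit of the stable case as in Section \ref{necessary}:
\begin{itemize}
\item $A\le1$ is the $2\times2$ minor $a_2a_3-a_1a_4\ge0$;
\item $B\le\tfrac14$ and $C\le\tfrac14$ are the real-rootedness of the quadratics $f_o$ and $f_e$ (Hermite–Biehler);
\item $A\ge C$ is $a_1a_2\ge a_0a_3$.
\end{itemize}
For the converse direction, (iii) gives $\Delta_2\ge0$ and $\Delta_4\ge0$ back. It also gives the real-rootedness needed for the $\GCD$ clause, because a common zero of $f_e,f_o$ is a zero of the quadratic $f_o$, which is real and negative when $B\le\tfrac14$.

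\textbf{Step 3: (iii)$\Leftrightarrow$(iv).} The key observation is the factorization of the quartic. Writing
\[P(A)=(A^2-BC)^2-A(A-B)(A-C)=(A^2-pA+BC)(A^2-qA+BC),\]
matching coefficients gives $p+q=1$ and $pq=B+C-4BC$. Hence $p,q=\tfrac12\bigl(1\pm\sqrt{(1-4B)(1-4C)}\bigr)$. Consequently the four roots of $P$ are exactly
\[\tfrac14\bigl(1\pm\sqrt{1-4B}\bigr)\bigl(1\pm\sqrt{1-4C}\bigr),\]
all real when $B,C\le\tfrac14$. Order them as $r_1\le r_2\le r_3\le r_4$, where $r_4=s_1(B,C)$ and $r_3=t_1(B,C)$.

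The condition $P(A)\le0$ then means $A\in[r_1,r_2]\cup[r_3,r_4]$. The last task is to show that the constraints $A\ge B$ and $A\ge C$ exclude the lower interval $[r_1,r_2]$, apart from boundary coincidences. For this I would check that $r_2\le\max(B,C)$, using that $r_1r_2=BC$ and that $r_2$ is the smaller root of one quadratic factor. The strict case repeats Steps 1–3 with strict inequalities, where Liénard–Chipart alone suffices and no $\GCD$ analysis is needed.
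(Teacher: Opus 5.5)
Your proof follows the paper's overall structure. You get (i)$\Leftrightarrow$(ii) by a case split on $\Delta_2,\Delta_4$ using Liénard–Chipart and Theorem~\ref{agt}, and (ii)$\Leftrightarrow$(iii) by normalizing $\Delta_2$ and $\Delta_4$; your computations there are correct. You get (iii)$\Leftrightarrow$(iv) by locating the four roots of $P(A)=(A^2-BC)^2-A(A-B)(A-C)$. Your factorization $P=(A^2-pA+BC)(A^2-qA+BC)$ is correct and more transparent than the paper's bare evaluation of $F$ at four points.

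In the case $\Delta_2>0$, $\Delta_4=0$ you use index $m=3$ in Theorem~\ref{agt}, whereas the paper checks the interlacing $f_o\prec f_e$ directly via their common point. Your route works, but $\Delta_3>0$ does not drop out of the two identities you list. It does follow from the additional identity $D\Delta_3=a_0\Delta_2^2+a_4\Delta_4$, where $D=a_1a_4-a_0a_5$. If $\Delta_4=0$, then either $D\leqslant 0$ and $\Delta_3=a_2\Delta_2-a_4D>0$, or $D>0$ and $\Delta_3=a_0\Delta_2^2/D>0$. Similarly, in case (c) Hermite–Biehler (iv) cannot be invoked as stated, since its right-hand side already contains $f\in\HH_5^\star$. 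What you actually use is $f(x)=(1+\tfrac{a_5}{a_4}x)f_e(x^2)$ together with the $\GCD$ clause, or Theorem~\ref{agt} with $m=1$ after checking $\Delta_3=0$, as the paper does.

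The one real error is in the exclusion step of Step 3: the relation $r_1r_2=BC$ is false.
The factor $A^2-pA+BC$ with $p=\tfrac12\bigl(1+\sqrt{(1-4B)(1-4C)}\bigr)$ has the equal-sign roots $\tfrac14(1\pm\sqrt{1-4B})(1\pm\sqrt{1-4C})$, namely $r_1$ and $r_4=s_1$. The other factor has the mixed-sign roots $r_2,r_3$. Hence $r_1r_4=r_2r_3=BC$. For $B=C=\tfrac15$, for example, one gets $r_2=r_3=\tfrac15$ and $r_1\approx 0.076$, so $r_1r_2\neq BC$.

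With your pairing, $r_1\leqslant r_2$ would give $r_2\geqslant\sqrt{BC}$, the wrong direction. So the argument as written does not yield $r_2\leqslant\max(B,C)$. With the correct pairing it is immediate: $r_2\leqslant r_3$ and $r_2r_3=BC$ give $r_2\leqslant\sqrt{BC}\leqslant\max(B,C)$. Equality holds only if $B=C$, and then $r_2=r_3=t_1(B,C)$, which disposes of your boundary coincidence. Thus, under $A\geqslant\max(B,C)$, the condition $P(A)\leqslant 0$ is equivalent to $A\in[t_1(B,C),s_1(B,C)]$, and the same argument gives the strict version.
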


\begin{lemma}\label{rownowaznosc2}
For $g \in \RR_5^+$ as in \eqref{gg} let us define
\[ X:=\frac{b_1 b_4}{b_2 b_3},\ \ \ \  Y:=\frac{b_1 b_5}{b_3^2},
   \ \ \ \  Z:=\frac{b_0 b_4}{b_2^2},\]
Then the following conditions are equivalent:
\begin{itemize}
\item[(i)] $\frac{g\ast Q^3_1}{x} \in \HH_3^\star $, $g*Q^5 \in \HH_5^\star$,
\item[(ii)] $\Delta_2\left(\frac{g* Q^3_1}{x}\right)=b_2b_3-b_1b_4\geqslant 0$,\\
$\Delta_2(g*Q^5)=2(b_3b_4-b_2b_5) \geqslant 0$,\\
$\Delta_4(g*Q^5)=4(b_3 b_4 - b_2b_5)(b_1b_2 - b_0 b_3)
- (b_1 b_4 - b_0 b_5)^2 \geqslant 0$,
\item[(iii)] 
\begin{equation*}
 X \in (0,1], \; Y \in (0,1],\; Z \in (0,1], \; X \geqslant Y,
\; X \geqslant Z,\; \frac{(X^2-YZ)^2}{X(X-Y)(X-Z)}\leqslant 4,
\end{equation*}
\item[(iv)] 
\begin{equation*}
 X \in (0,1], \; Y \in (0,1],\; Z \in (0,1], \; X\geqslant Y,
\; X \geqslant Z,\;t_4(Y,Z)\leqslant 1,\; X \in [t_4(Y,Z),1],
\end{equation*}
where 
\begin{equation*}
t_4(u,v)=\max \left\{(1+\sqrt{1-u})(1-\sqrt{1-v}),
(1-\sqrt{1-u})(1+\sqrt{1-v}) \right\}.
\end{equation*}
\end{itemize}
\end{lemma}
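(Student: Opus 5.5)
The plan is to reduce everything to Lemma \ref{rownowaznosc1} applied to the auxiliary polynomial $h:=g\ast Q^5$ (together with a separate treatment of $\frac{g\ast Q^3_1}{x}$), and then to translate the resulting conditions into the variables $X,Y,Z$.

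\emph{Step 1: (i)$\Leftrightarrow$(ii).} Since $Q^5(x)=(x^2+1)^2(1+x)$, we have
\[
h(x)=b_5x^5+b_4x^4+2b_3x^3+2b_2x^2+b_1x+b_0 .
\]
By Lemma \ref{rownowaznosc1}, $h\in\HH_5^\star$ is equivalent to three conditions: $\Delta_2(h)\geqslant0$, $\Delta_4(h)\geqslant0$, and the requirement that $\GCD(h_e,h_o)$ has only negative zeros. Expanding gives $\Delta_2(h)=2(b_3b_4-b_2b_5)$ and $\Delta_4(h)=4(b_3b_4-b_2b_5)(b_1b_2-b_0b_3)-(b_1b_4-b_0b_5)^2$.

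The polynomial $\frac{g\ast Q^3_1}{x}=b_4x^3+b_3x^2+b_2x+b_1$ is of degree three. By Theorem \ref{agt} it lies in $\HH_3^\star$ iff $b_2b_3-b_1b_4\geqslant0$ and, in the equality case, the common factor $x+b_1/b_3$ has a negative zero, which is automatic.

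The main obstacle in this step is the GCD condition for $h$ when $\Delta_4(h)=0$. Here $h_o=b_5x^2+2b_3x+b_1$, and I would argue as follows. If $\Delta_4(h)=0$ while $\Delta_2(h)>0$, then $h_e$ and $h_o$ share exactly one root. That root is a root of $h_o$, which has positive coefficients, so if it is real it is negative; complex roots cannot occur, because a common root would force a quadratic common factor, contradicting the degree count. If instead $\Delta_2(h)=0$, the Hurwitz inequalities force proportionality ($h_e=ch_o$ up to degree), and both parts have positive coefficients. I would need to check that nonnegativity of the minors then forces the discriminant $b_3^2-b_1b_5$ to be nonnegative; I expect this to follow from $\Delta_4\geqslant0$, which gives $b_1b_2\geqslant b_0b_3$, combined with the total nonnegativity mentioned earlier. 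This degenerate case is where I expect the real work to lie.

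\emph{Step 2: (ii)$\Leftrightarrow$(iii).} Divide each inequality by suitable monomials.
\begin{itemize}
\item $b_2b_3\geqslant b_1b_4$ becomes $X\leqslant1$.
\item $b_3b_4\geqslant b_2b_5$ becomes $Y\leqslant X$, since $b_2b_5/(b_3b_4)=Y/X$.
\item Similarly $b_1b_2\geqslant b_0b_3$ becomes $Z\leqslant X$.
\end{itemize}
For the $\Delta_4$ inequality, divide by $b_2^2b_3^2$ and write $b_0b_5/(b_2b_3)=YZ/X$. This gives
\[
4(1-Y/X)(1-Z/X)\geqslant (X-YZ/X)^2,
\]
i.e.\ $(X^2-YZ)^2\leqslant 4X(X-Y)(X-Z)$. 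This is exactly (iii), and it also yields $Y,Z\leqslant X\leqslant1$. The mirror computation with the factor $1$ in place of $4$ is the same one performed in Lemma \ref{rownowaznosc1}.

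\emph{Step 3: (iii)$\Leftrightarrow$(iv).} Regard $\Phi(X)=4X(X-Y)(X-Z)-(X^2-YZ)^2$ as a quartic in $X$. Expanding gives
\[
\Phi(X)=3X^4-4(Y+Z)X^3+(4YZ+2YZ)X^2-Y^2Z^2,
\]
so $\Phi$ is not obviously factorable. Instead, I would substitute $X=pq$, $Y=p(2-p)$-type parametrisations suggested by the form of $t_4$. Namely, set $Y=1-(1-\alpha)^2$, so that $1\pm\sqrt{1-Y}$ appear, and similarly for $Z$. Then I would show that the roots of $\Phi$ in the region $X\geqslant\max(Y,Z)$ are the two products $(1\pm\sqrt{1-Y})(1\mp\sqrt{1-Z})$ and $(1\pm\sqrt{1-Y})(1\pm\sqrt{1-Z})$ scaled appropriately, mirroring $t_1,s_1$ from Lemma \ref{rownowaznosc1} with the constant $1/4$ replaced by $1$. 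The largest root exceeds $1$, so together with $X\leqslant1$ the admissible set is $X\in[t_4,1]$, nonempty iff $t_4\leqslant1$. I would verify the root identification by direct substitution, and this algebraic factorisation is the second place where I expect the effort to concentrate.
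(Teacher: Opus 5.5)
\textbf{Gap in Step 1, direction (ii)$\Rightarrow$(i).} Your Steps 2 and 3 follow the paper's route. Step 1, however, has a genuine gap in the degenerate case you flagged, and the fix you propose there would fail.

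Suppose $\Delta_2(h)=0$. Then $\Delta_4(h)=-(b_1b_4-b_0b_5)^2\geqslant0$ forces $b_1b_4=b_0b_5$. Hence $b_4/b_5=b_2/b_3=b_0/b_1=:c$, so $h_e=c\,h_o$ and $\GCD(h_e,h_o)=x^2+\frac{2b_3}{b_5}x+\frac{b_1}{b_5}$. Its zeros are real iff $b_3^2\geqslant b_1b_5$.

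This inequality cannot be extracted from the minors of $h$:
\begin{itemize}
\item here $\Delta_4(h)\geqslant0$ only yields $b_1b_2=b_0b_3$;
\item total nonnegativity of $H(h)$ is a \emph{consequence} of quasi-stability of $h$, so invoking it in this direction is circular.
\end{itemize}
Concretely, $g(x)=x^5+x^4+x^3+x^2+2x+2$ gives $h=g\ast Q^5=(x+1)(x^4+2x^2+2)$. All Hurwitz principal minors of $h$ are nonnegative ($\Delta_1=1$, $\Delta_2=\Delta_3=\Delta_4=\Delta_5=0$), yet $h\notin\HH_5^\star$, since $x^2=-1\pm i$ has roots with positive real part.

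The missing idea is to use the \emph{other} hypothesis in (ii), $b_2b_3\geqslant b_1b_4$ (the $\frac{g\ast Q_1^3}{x}$ condition), inside the analysis of $h$. Your example violates it, since $1<2$.
\begin{itemize}
\item Multiplying it by $b_3b_4\geqslant b_2b_5$ gives $b_3^2\geqslant b_1b_5$. In the proportional case it reads literally $c(b_3^2-b_1b_5)\geqslant0$.
\item Multiplying it by $b_1b_2\geqslant b_0b_3$ gives $b_2^2\geqslant b_0b_4$.
\end{itemize}
This is exactly how the paper couples the two conditions: it first obtains from all three inequalities that $h_e,h_o$ have only real negative zeros, and only then splits into cases. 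Note also that $b_1b_2\geqslant b_0b_3$ follows from $\Delta_4\geqslant0$ only when $\Delta_2>0$. When $\Delta_2=0$ it comes from the proportionality above.

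\textbf{Algebra corrections.} In Step 2, dividing $\Delta_4\geqslant0$ by $b_2^2b_3^2$ gives $4X(1-Y/X)(1-Z/X)\geqslant(X-YZ/X)^2$; you dropped the factor $X$. Your final form $(X^2-YZ)^2\leqslant4X(X-Y)(X-Z)$ is nevertheless correct.

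The same slip makes your expansion of $\Phi$ wrong: what you wrote is the expansion of $4X^2(X-Y)(X-Z)-(X^2-YZ)^2$. The correct expansion is $\Phi(X)=-X^4+4X^3+(2YZ-4Y-4Z)X^2+4YZX-Y^2Z^2$.

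Your root identification does hold for this $\Phi$. Put $u=\sqrt{1-Y}$, $v=\sqrt{1-Z}$ and $r_{\epsilon_1\epsilon_2}:=(1+\epsilon_1u)(1+\epsilon_2v)$ with $\epsilon_i=\pm1$. Then $Y=(1+u)(1-u)$ and $Z=(1+v)(1-v)$, and at $X=r_{\epsilon_1\epsilon_2}$ one gets:
\begin{itemize}
\item $X^2-YZ=2X(\epsilon_1u+\epsilon_2v)$;
\item $X-Y=(1+\epsilon_1u)(\epsilon_1u+\epsilon_2v)$;
\item $X-Z=(1+\epsilon_2v)(\epsilon_1u+\epsilon_2v)$.
\end{itemize}
Hence $\Phi=-\prod(X-r_{\epsilon_1\epsilon_2})$.

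To finish Step 3 you still need that the lower component $[r_{--},\min(r_{+-},r_{-+})]$ of $\{\Phi\geqslant0\}$ lies in $X\leqslant\max(Y,Z)$. For example, $r_{+-}\leqslant Z$ when $Y\geqslant Z$. Then under $X\geqslant\max(Y,Z)$ only $[t_4,r_{++}]$ survives, and $r_{++}\geqslant1$ gives the interval $[t_4,1]$.
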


\begin{proof}[Proof of Lemmas 1 and 2]
Let us consider $f, g \in \RR_5^+$ given by \eqref{ff}and \eqref{gg},
respectively. The implication $(i)\Rightarrow(ii)$ is an immediate consequence
of Theorem \ref{agt}. To prove the reverse implication $(i)\Leftarrow(ii)$, we
notice that since $b_2b_3\geqslant b_1b_4$, Theorem \ref{agt} implies that the
polynomial $\frac{g\ast Q^3_1}{x}$ is quasi-stable. For the polynomial
$g*Q^5$, the additional condition $b_2b_3-b_1b_4\geqslant0$, together with the
conditions $b_3b_4-b_2b_5\geqslant 0$ and $b_1b_2-b_0b_3 \geqslant0$, ensures
that the zeros of its even and odd parts are real and negative. Consequently,
$\GCD((g*Q^5)_e,(g*Q^5)_o)$ can only have negative zeros. We distinguish three
cases.
\begin{itemize}
\item[(1)] If $\Delta_2(g*Q^5)> 0$ and $\Delta_4(g*Q^5)>0$, then, by the Liénard-Chipart Theorem, $g*Q^5$ is stable.
\item[(2)]If $\Delta_2(g*Q^5)=0$ and $\Delta_4(g*Q^5)=0$, then one easily
verifies that $\Delta_3(g*Q^5)=0$. Hence by Theorem~\ref{agt}, $g*Q^5$ is
quasi-stable.
\item[(3)]Assume that $\Delta_2(g*Q^5)=0$ and $\Delta_4(g*Q^5)>0$. We show
that $(g*Q^5)_o \prec (g*Q^5)_e$. Define $G_o(x):=
\tfrac{(g*Q^5)_o}{b_5}$ and $G_e(x):=\tfrac{(g*Q^5)_e}{b_4}$. Clearly
$(g*Q^5)_o \prec (g*Q^5)_e$ if and only if $G_o \prec G_e$. This, however,
is equivalent to
\begin{equation*}
\frac{b_1}{b_5}\geqslant\frac{b_0}{b_4} \; \textrm{ and } \;
\exists x_0 \in \RR : G_o(x_0)=G_e(x_0)\leqslant 0 .
\end{equation*}
The first inequality is clear. Solving the equation $G_o(x_0)=G_e(x_0)$ gives
\begin{equation*}
x_0=\frac{b_0b_5-b_1b_4}{2(b_3b_4-b_2b_5)}.
\end{equation*}
Substituting $x_0$ yields
\begin{equation*}
G_o(x_0)=-\frac{\Delta_4(g*Q^5)}{4(b_3b_4-b_2b_5)}=0.
\end{equation*}
Therefore, by the Hermite-Biehler Theorem $g*Q^5$ is quasi-stable.
\end{itemize}
The same reasoning applies to the polynomial $f$.

To show that $(ii)\Leftrightarrow(iii)$, we observe that the condition
$b_2b_3-b_1b_4 \geqslant 0$ is equivalent to $X \in (0,1]$. From the
remaining conditions, we can also easily obtain
\begin{equation*}
X\geqslant Y,\;X \geqslant Z.
\end{equation*}
Additionally, by the Hermite–Biehler Theorem, we know that the polynomials
$(g*Q_5)_e(x)=b_4x^2+2b_2x+b_0$ and $(g*Q_5)_o(x)=b_5x^2+2b_3x+b_1$ have
real zeros, from which we deduce
\begin{equation*}
Y \in \left(0,1\right],\; Z \in \left(0,1\right].
\end{equation*}
To prove the condition
\begin{equation*}
\frac{(X^2-YZ)^2}{X(X-Y)(X-Z)}\leqslant 
\end{equation*}
it suffices to note the equivalence of
\begin{equation*}
4(b_3b_4-b_2b_5)(b_1b_2-b_0b_3)\geqslant(b_1b_4-b_0b_5)^2
\end{equation*}
and
\begin{equation*}
4\frac{b_3^2b_2}{b_1}\left(\frac{b_1b_4}{b_2b_3}-\frac{b_1b_5}{b_3^2}\right)
\frac{b_2^2b_3}{b_4}\left(\frac{b_1b_4}{b_2b_3}-\frac{b_0b_4}{b_2^2}\right)
\geqslant\frac{b_2^4b_3^4}{b_1^2b_4^2}\left(\frac{b_1^2b_4^2}{b_2^2b_3^2}
-\frac{b_1b_5}{b_3^2}\frac{b_0b_4}{b_2^2}\right)^2.
\end{equation*}
Moreover, if  $X=Y$, then $X=Z$ and the above inequality is satisfied.
It suffices to apply the same reasoning to the polynomial $f$.

To verify that $(iii)\Leftrightarrow(iv)$ it is enough to show the equivalences
\begin{itemize}
\item[(1)]  \begin{equation*}
\frac{(A^2-BC)^2}{A(A-B)(A-C)} \leqslant 1 \Leftrightarrow A \in[t_1(B,C),s_1(B,C)],
\end{equation*}
\item[(2)] \begin{equation*}
\frac{(X^2-YZ)^2}{X(X-Y)(X-Z)}\leqslant 4 \Leftrightarrow t_4(Y,Z) \leqslant 1 \textrm{ and } X \in[t_4(Y,Z),1]. 
\end{equation*}
\end{itemize}
Let us consider the function
\begin{equation*}
F(t)=\frac{(t^2-uv)}{t(t-u)(t-v)} \textrm{ for } t\in (0,1],t>u,t>v.
\end{equation*}
Then
\begin{multline*}
F\left(\tfrac{1}{4}(1-\sqrt{1-4u})(1-\sqrt{1-4v})\right)
 =F\left(\tfrac{1}{4}(1+\sqrt{1-4u})(1-\sqrt{1-4v})\right)=\\
=F\left(\tfrac{1}{4}(1-\sqrt{1-4u})(1+\sqrt{1-4v})\right)
 =F\left(\tfrac{1}{4}(1+\sqrt{1-4u})(1+\sqrt{1-4v})\right)=1.
\end{multline*}
Let
\begin{equation*}
t_1(u,v)=\max \left\{ \tfrac{1}{4}(1+\sqrt{1-4u})(1-\sqrt{1-4v}) ,
\tfrac{1}{4}(1-\sqrt{1-4u})(1+\sqrt{1-4v}) \right\},
\end{equation*}
and
\begin{equation*}
s_1(u,v)=\tfrac{1}{4}(1+\sqrt{1-4v})(1+\sqrt{1-4u}).
\end{equation*}
We see that the inequality $\frac{(A^2-BC)^2}{A(A-B)(A-C)}\leqslant 1$ for
$A\geqslant B$ and $A \geqslant C$ holds if and only if
$A \in[t_1(B,C),s_1(B,C)]$.
In the same way, we verify that
\begin{multline*}
F\left((1-\sqrt{1-u})(1-\sqrt{1-v})\right)
 =F\left((1+\sqrt{1-u})(1-\sqrt{1-v})\right)=\\
=F\left((1-\sqrt{1-u})(1+\sqrt{1-v})\right)
 =F\left((1+\sqrt{1-u})(1+\sqrt{1-v})\right)=4.
\end{multline*}
Let
\begin{equation*}
t_4(u,v)=\max \left\{(1+\sqrt{1-u})(1-\sqrt{1-v}),
(1-\sqrt{1-u})(1+\sqrt{1-v}) \right\},
\end{equation*}
and
\begin{equation*}
s_4(u,v)=(1+\sqrt{1-u})(1+\sqrt{1-v}).
\end{equation*}
Similarly, we see that since $s_4(u,v)\geqslant 1$, the inequality
$\frac{(X^2-YZ)^2}{X(X-Y)(X-Z)} \leqslant 4$ for $X \geqslant Y$,
$X \geqslant Z$ holds if and only if $t_4(Y,Z)\leqslant1 \textrm{ and } X \in[t_4(Y,Z),1]$.
\end{proof}

\begin{lemma}\label{funkcje}
Let us consider the following functions
\begin{equation*}
\varphi_-(t)=1-\sqrt{1-t}, \qquad \varphi_+(t)=1+\sqrt{1-t}.
\end{equation*}
Then, for all $a \in(0,1)$ the functions
 \begin{equation*}
 t \mapsto \frac{\varphi_-(at)}{\varphi_-(t)}, \qquad
   t \mapsto \frac{\varphi_+(at)}{\varphi_-(t)}
   \end{equation*}
are decreasing for $t \in (0,1]$, and the functions
\begin{equation*}
t \mapsto  \frac{\varphi_-(at)}{\varphi_+(t)}, \qquad
  t \mapsto \frac{\varphi_+(at)}{\varphi_+(t)}
  \end{equation*}
are increasing for $t \in (0,1]$
\end{lemma}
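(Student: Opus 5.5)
The plan is to reduce all four claims to two elementary facts: $\varphi_+$ is positive and decreasing on $(0,1]$, and $\varphi_-$ is positive and increasing on $(0,1]$. The key algebraic identity is
\begin{equation*}
\varphi_-(t)\,\varphi_+(t)=1-(1-t)=t, \qquad\text{so}\qquad \varphi_-(t)=\frac{t}{\varphi_+(t)} \quad (t\in(0,1]).
\end{equation*}
This identity lets us rewrite each quotient containing $\varphi_-$ in terms of $\varphi_+$. That leaves only one function whose monotonicity needs a derivative computation.

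\emph{Step 1 (the two easy cases).} Fix $a\in(0,1)$; then $at\in(0,1)$ for $t\in(0,1]$.

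For $t\mapsto \varphi_+(at)/\varphi_-(t)$: the numerator is positive and decreasing in $t$, and the denominator is positive and increasing. Hence the quotient is decreasing.

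For $t\mapsto \varphi_-(at)/\varphi_+(t)$: by the identity,
\begin{equation*}
\frac{\varphi_-(at)}{\varphi_+(t)}=\frac{at}{\varphi_+(at)\,\varphi_+(t)}.
\end{equation*}
Here the numerator is increasing and the positive denominator is a product of two positive decreasing functions. So the quotient is increasing.

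\emph{Step 2 (the main computation).} Show that $t\mapsto \varphi_+(at)/\varphi_+(t)$ is increasing; this is the only step needing care. On $(0,1)$ we compare logarithmic derivatives:
\begin{equation*}
\frac{d}{dt}\ln\varphi_+(at)=-\frac{a}{2\sqrt{1-at}\,\bigl(1+\sqrt{1-at}\bigr)},\qquad
\frac{d}{dt}\ln\varphi_+(t)=-\frac{1}{2\sqrt{1-t}\,\bigl(1+\sqrt{1-t}\bigr)}.
\end{equation*}
The derivative of the log of the quotient is their difference, and it is positive if and only if
\begin{equation*}
a\sqrt{1-t}\bigl(1+\sqrt{1-t}\bigr)<\sqrt{1-at}\bigl(1+\sqrt{1-at}\bigr).
\end{equation*}
This holds because $a<1$ and $0<\sqrt{1-t}\leqslant\sqrt{1-at}$. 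The only subtlety is the endpoint $t=1$, where $\varphi_+$ is not differentiable. We handle it by noting that the quotient is continuous on $(0,1]$ and strictly increasing on $(0,1)$, hence increasing on $(0,1]$.

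\emph{Step 3 (the remaining case).} Applying the identity to both $\varphi_-(at)$ and $\varphi_-(t)$ gives
\begin{equation*}
\frac{\varphi_-(at)}{\varphi_-(t)}=\frac{at/\varphi_+(at)}{t/\varphi_+(t)}=a\,\frac{\varphi_+(t)}{\varphi_+(at)}.
\end{equation*}
This is $a$ times the reciprocal of the positive increasing function from Step 2, so it is decreasing on $(0,1]$. I expect no real obstacle beyond the sign check in Step 2 and the boundary point $t=1$.
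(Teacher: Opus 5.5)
Your proof is correct, but it takes a different route from the paper.

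The paper differentiates $\varphi_-(at)/\varphi_-(t)$ directly. It reduces the sign of the derivative to the inequality $1-\sqrt{1-at}<a(1-\sqrt{1-t})$, converts this by ``elementary operations'' into a polynomial inequality in $t$, and then says the other three quotients are handled ``in a similar way''.

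You instead use the identity $\varphi_-(t)\varphi_+(t)=t$. It shows that the first and fourth claims are the same statement, since $\varphi_-(at)/\varphi_-(t)=a\,\varphi_+(t)/\varphi_+(at)$. The second and third claims need no calculus, because numerator and denominator move in opposite directions. For the third you do not even need the identity, since $\varphi_-(at)$ increases and $\varphi_+(t)$ decreases. So only one derivative computation remains. You also handle the endpoint $t=1$, where $\sqrt{1-t}$ is not differentiable, explicitly by continuity, which the paper passes over.

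Your identity also makes the paper's key inequality immediate. The inequality $\varphi_-(at)<a\,\varphi_-(t)$ is equivalent to $at/\varphi_+(at)<at/\varphi_+(t)$, that is, to $\varphi_+(t)<\varphi_+(at)$. This is just the monotonicity of $\varphi_+$.

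In short, your argument explains why there is really only one nontrivial monotonicity statement and proves all four cases in full. The paper's direct approach is more computational and leaves three of the four cases to the reader.
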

\begin{proof}
Let us set $a \in (0,1)$ and consider
\begin{equation*}
\varPhi(a,t)=\frac{\varphi_-(at)}{\varphi_-(t)}=
\frac{1-\sqrt{1-at}}{1-\sqrt{1-t}}.
\end{equation*}
By calculating the derivative
\begin{equation*}
\frac{\partial \varPhi}{\partial t}(a,t)=
\frac{a(\sqrt{1-t}-1)-\sqrt{1-at}+1}{2(1-\sqrt{1-t})^2\sqrt{1-t}\sqrt{1-at}}.
\end{equation*}
It is enough to show that
\begin{equation*}
a(\sqrt{1-t}-1)-\sqrt{1-at}+1<0 \qquad \forall t \in (0,1].
\end{equation*}
We have
\begin{equation*}
1-\sqrt{1-at}<a(1-\sqrt{1-t})
\end{equation*}
which, after applying elementary operations (bearing in mind that $a \in (0,1)$)
is equivalent to
\begin{equation*}
0<(a^2+a+1)t^2-3(1+a)t+3.
\end{equation*}
It is easily verified that for $t \in (0,1]$ the inequality holds. The
remaining cases can be proved in a similar way.
\end{proof}

\begin{proof}[Proof of Theorem \ref{main5}]
In light of Corollary \ref{necessary5} and Theorem \ref{main4}, we only
need to prove the implication in one direction for the case where $m=5$. Let
us consider $f, g \in \RR_5^+$ satisfying the conditions of the theorem, given
by \eqref{ff} and \eqref{gg} respectively. Thanks to Lemma \ref{rownowaznosc1}
and Lemma \ref{rownowaznosc2} (using the same notation), it suffices to show
that from
\begin{equation*}
A \in (0,1], \; B \in \left(0,\tfrac{1}{4}\right],\;
C \in \left(0,\tfrac{1}{4}\right], \; A \geqslant B, \; A \geqslant C ,\;
A \in [t_1(B,C),s_1(B,C)],
\end{equation*}
where \begin{equation*}
t_1(u,v)=\max \left\{ \tfrac{1}{4}(1+\sqrt{1-4u})(1-\sqrt{1-4v}) ,
\tfrac{1}{4}(1-\sqrt{1-4u})(1+\sqrt{1-4v}) \right\},
\end{equation*}
\begin{equation*}
s_1(u,v)=\tfrac{1}{4}(1+\sqrt{1-4u})(1+\sqrt{1-4v}),
\end{equation*}
and
\begin{equation*}
 X \in (0,1], \; Y \in (0,1],\; Z \in (0,1], \; X \geqslant Y,
\; X \geqslant Z,\;t_4(Y,Z)\leqslant 1,\; X \in [t_4(Y,Z),1],
\end{equation*}
where 
\begin{equation*}
t_4(u,v)=\max \left\{(1+\sqrt{1-u})(1-\sqrt{1-v}),
(1-\sqrt{1-u})(1+\sqrt{1-v}) \right\},
\end{equation*}
it follows that
\begin{equation*}
AX \in (0,1], \; BY \in \left(0,\tfrac{1}{4}\right],\;
CZ \in \left(0,\tfrac{1}{4}\right], \; AX \geqslant BY, \; AX \geqslant CZ ,\;
AX \in [t_1(BY,CZ),s_1(BY,CZ)].
\end{equation*}
The only non-obvious condition is
\begin{equation*}
AX \in [t_1(BY,CZ),s_1(BY,CZ)].
\end{equation*}
Note that if $A \leqslant s_1(B,C)$, then since $X \leqslant 1$ we get
\begin{equation*}
AX \leqslant s_1(B,C)\leqslant s_1(BY,CZ)
\end{equation*}
because $Y\leqslant 1,Z \leqslant 1$.
It therefore remains to prove that
\begin{equation*}
t_1(BY,CZ) \leqslant AX.
\end{equation*}
It suffices to show that
\begin{equation*}
t_1(BY,CZ)\leqslant t_1(B,C)\cdot t_4(Y,Z).
\end{equation*}
We consider three cases:
\begin{itemize}
\item[(1)] Let us assume that $B\geqslant C,Y \geqslant Z$. Then
\begin{align*}
t_1(B,C)&=\tfrac{1}{4}(1-\sqrt{1-4B})(1+\sqrt{1-4C})\\
t_4(Y,Z)&=(1-\sqrt{1-Y})(1+\sqrt{1-Z})\\
t_1(BY,CZ)&=\tfrac{1}{4}(1-\sqrt{1-4BY})(1+\sqrt{1-4CZ}).
\end{align*}
In order to show that
\begin{equation*}
(1-\sqrt{1-4BY})(1+\sqrt{1-4CZ})\leqslant
(1-\sqrt{1-4B})(1+\sqrt{1-4C})(1-\sqrt{1-Y})(1+\sqrt{1-Z})
\end{equation*}
by rearranging the inequality, we obtain
\begin{equation*}
\frac{1-\sqrt{1-4BY}}{(1-\sqrt{1-4B})(1-\sqrt{1-Y})}\leqslant
\frac{1-\sqrt{1-4CZ}}{(1-\sqrt{1-4C})(1-\sqrt{1-Z})}.
\end{equation*}
From Lemma \ref{funkcje}
\begin{equation*}
\frac{1-\sqrt{1-4BY}}{(1-\sqrt{1-4B})(1-\sqrt{1-Y})}\leqslant
\frac{1-\sqrt{1-4CY}}{(1-\sqrt{1-4C})(1-\sqrt{1-Y})} \leqslant
\frac{1-\sqrt{1-4CZ}}{(1-\sqrt{1-4C})(1-\sqrt{1-Z})}.
\end{equation*}

\item[(2)] Let us assume that $B \leqslant C,Y \geqslant Z$ and $BY \geqslant CZ$.
Then
\begin{align*}
t_1(B,C)&=\tfrac{1}{4}(1+\sqrt{1-4B})(1-\sqrt{1-4C})\\
t_4(Y,Z)&=(1-\sqrt{1-Y})(1+\sqrt{1-Z})\\
t_1(BY,CZ)&=\tfrac{1}{4}(1-\sqrt{1-4BY})(1+\sqrt{1-4CZ}).
\end{align*}
In order to show that
\begin{equation*}
(1-\sqrt{1-4BY})(1+\sqrt{1-4CZ}) \leqslant
(1+\sqrt{1-4B})(1-\sqrt{1-4C})(1-\sqrt{1-Y})(1+\sqrt{1-Z})
\end{equation*}
by rearranging the inequality, we obtain
\begin{equation*}
\frac{1-\sqrt{1-4BY}}{(1+\sqrt{1-4B})(1-\sqrt{1-Y})} \leqslant
\frac{1-\sqrt{1-4CZ}}{(1+\sqrt{1-4C})(1-\sqrt{1-Z})}.
\end{equation*}
From Lemma \ref{funkcje}
\begin{equation*}
\frac{1-\sqrt{1-4BY}}{(1+\sqrt{1-4B})(1-\sqrt{1-Y})}\leqslant
\frac{1-\sqrt{1-4CY}}{(1+\sqrt{1-4C})(1-\sqrt{1-Y})} \leqslant
\frac{1-\sqrt{1-4CZ}}{(1+\sqrt{1-4C})(1-\sqrt{1-Z})}.
\end{equation*}

\item[(3)] Let us assume that $B \leqslant C,Y \geqslant Z$ and $BY \leqslant CZ$.
Then
\begin{align*}
t_1(B,C)&=\tfrac{1}{4}(1+\sqrt{1-4B})(1-\sqrt{1-4C})\\
t_4(Y,Z)&=(1-\sqrt{1-Y})(1+\sqrt{1-Z})\\
t_1(BY,CZ)&=\tfrac{1}{4}(1+\sqrt{1-4BY})(1-\sqrt{1-4CZ}).
\end{align*}
In order to show that
\begin{equation*}
(1+\sqrt{1-4BY})(1-\sqrt{1-4CZ}) \leqslant
(1+\sqrt{1-4B})(1-\sqrt{1-4C})(1-\sqrt{1-Y})(1+\sqrt{1-Z})
\end{equation*}
by rearranging the inequality, we obtain
\begin{equation*}
\frac{1+\sqrt{1-4BY}}{(1+\sqrt{1-4B})(1-\sqrt{1-Y})} \leqslant
\frac{1+\sqrt{1-4CZ}}{(1+\sqrt{1-4C})(1-\sqrt{1-Z})}.
\end{equation*}
From Lemma \ref{funkcje}
\begin{equation*}
\frac{1+\sqrt{1-4BY}}{(1+\sqrt{1-4B})(1-\sqrt{1-Y})} \leqslant
\frac{1+\sqrt{1-4CY}}{(1+\sqrt{1-4C})(1-\sqrt{1-Y})} \leqslant
\frac{1+\sqrt{1-4CZ}}{(1+\sqrt{1-4C})(1-\sqrt{1-Z})},
\end{equation*}
which proves (ii).
\end{itemize}
In case (i), when $f \in \HH_5$, the proof proceeds in the same way, with the
exception that the relevant strict inequalities hold.
\end{proof}

\section{The idealizer of the set of quasi-stable polynomials}\label{quasi}

One might wonder about the idealizer for the set $\HH_n^\star$. Again for each
$n$ we seek a maximal subsemigroup $X_n^\star$ with the Hadamard product $\ast$,
such that $\HH_n^\star \subset X_n^\star$, $\II_n \in X_n^\star$, and for
$m \leqslant\ n$
\begin{equation*}
\ast : X_n^\star \times \HH_m^\star \rightarrow \HH_m^\star.
\end{equation*}
Since polynomials in $\HH_n^\star$ may have vanishing coefficients, and
$\HH_n^\star$ has to be a subset of $X_n^\star$, we must allow the polynomials
from $X_n^\star$ to also have nonnegative coefficients. Hence
\begin{equation*}
X_n^\star = \left\{ g(x)= \sum_{i=0}^n b_i x^i,\; b_i \geqslant 0
\;(i=1,\ldots,n-1),\; b_0,b_n >0 : \forall m \leqslant n, \;\forall f \in
\HH_m^\star \;\;\; f \ast g \in \HH_m^\star \right\}
\end{equation*}

It is easy to observe that for odd $n$, we have $X_n = X_n^\star$. Indeed, in
this case, the quasi-stable polynomials under consideration have all coefficients
positive, and hence (provided that $b_n, b_0 > 0$) all coefficients of any
$g \in X_n^\star$ must also be positive.
In contrast, when $n$ is even, quasi-stable polynomials may have vanishing
odd-indexed coefficients. In such cases, they are of the form
$f(x) = f_e(x^2)$, where $f_e$ is a polynomial with only real zeros
(by the Hermite–Biehler theorem). To obtain $X_n^\star$, it is therefore
sufficient to enlarge the set $X_n$ by including polynomials of the form
$g(x) = g_e(x^2)$, where the coefficients of $g_e$, when combined with those
of $f_e$ via the Hadamard product, preserve the reality of zeros.
Let $\Hyp_n^+$ denote the class of polynomials of degree $n$ with only real
negative zeros. Then
\begin{equation*}
X_{2l}^\star = X_{2l} \cup \left\{ g(x) = g_e(x^2),\; g_e \in \RR_l^+ :
\forall f \in \Hyp_l^+ \; g_e * f \in \Hyp_l^+ \right\}.
\end{equation*}
In other words, the coefficients of $g_e$ must form a \textbf{finite multiplier
sequence} (cf.\ \cite{k2008}) preserving $\Hyp_l^+$. Thus, in the quasi-stable
setting, we incorporate a special class of polynomials that highlights the
strong connections between the results of this paper and the Pólya-Schur
Theorem. Thus
\begin{multline*}
X_4^\star=X_4 \cup \left\{g(x)=g_e(x^2),\; g_e \in \RR_2^+ :
g*Q_4 \in \HH_4^\star \right\} =\\
= X_4 \cup \left\{g(x)=g_e(x^2),\; g_e \in \RR_2^+ :
g_e(x)*(x+1)^2 \in \Hyp_2^+ \right\}.
\end{multline*}
In analogy with the previous construction, we set $Y_n^\star=Y_n$ for odd $n$,
while for $n=2l$ we define $Y_{2l}^\star$ as follows
\begin{multline*}
Y_{2l}^\star:= Y_{2l} \cup \left\{g(x)=g_e(x^2),\; g_e \in \RR_l^+ :
g*Q_{2\nu} \in \HH_{2\nu}^\star  \;\; \forall\; \nu \in \{2,\ldots,l\}
\right\}=\\
=Y_{2l} \cup \left\{g(x)=g_e(x^2),\; g_e \in \RR_l^+ :
g_e(x)*(x+1)^\nu \in \Hyp_\nu^+  \;\; \forall\;
\nu \in \{2,\ldots,l\}\right\}.
\end{multline*}
We therefore formulate the corresponding conjecture for quasi-stable polynomials.
\begin{conjecture}\label{conjecture2}
For any $n \in \NN$, equality $Y_n^\star = X_n^\star$ holds.
\end{conjecture}

\section{A special case}\label{misc}

We now show a sufficient condition for a special case for all odd $n$, which
provides the additional rationale behind our hypothesis.

\begin{theorem}\label{scase}
Suppose $G(x)=g(x^2)+xg(x^2)=(x+1)g(x^2)$, where $g \in \RR_k^+$. Then
the following implication holds
\begin{equation*}
G \ast Q^{2k+1} \in \HH_{2k+1}^\star \Rightarrow F \ast G \in
\HH_{2k+1}^\star \qquad \forall F \in \HH_{2k+1}^\star.
\end{equation*}
\end{theorem}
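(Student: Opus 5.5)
The plan is to reduce the statement to a question about real-rootedness of Hadamard products in degree $k$, and then use the classical Schur–Szeg\H{o} composition theorem together with the Hermite–Kakeya–Obreschkoff theorem on interlacing.

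\textbf{Step 1: the hypothesis.} Write $g(x)=\sum_{j=0}^k b_jx^j$. Then $G_e=G_o=g$, and $Q^{2k+1}=(1+x)(x^2+1)^k$ has $Q^{2k+1}_e=Q^{2k+1}_o=(x+1)^k$. Hence $(G\ast Q^{2k+1})_e=(G\ast Q^{2k+1})_o=p$, where
\[
p(x):=\sum_{j=0}^k \binom{k}{j} b_j x^j .
\]
By part (iv) of the Hermite–Biehler Theorem (with $c=1$), the hypothesis $G\ast Q^{2k+1}\in\HH_{2k+1}^\star$ is equivalent to $p\in\Hyp_k^+$. So $b_j=p_j/\binom{k}{j}$, where $p=\sum\binom kj p_jx^j$ has only real, negative zeros.

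\textbf{Step 2: the target.} Take $F\in\HH_{2k+1}^\star$. We have $(F\ast G)_e=T(F_e)$ and $(F\ast G)_o=T(F_o)$, where $T$ is the linear operator
\[
T\Big(\sum_j c_jx^j\Big)=\sum_j b_jc_jx^j
\]
on polynomials of degree $\le k$. By the Hermite–Biehler Theorem, $F_e$ and $F_o$ have only negative zeros and $F_o\prec F_e$, and it suffices to prove the same for $T(F_e)$ and $T(F_o)$.

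Write a real-rooted $q$ of degree $\le k$ as $q=\sum\binom kj q_jx^j$. Then
\[
T(q)=\sum_j\binom kj p_jq_jx^j,
\]
which is exactly the Schur–Szeg\H{o} (Mal\'o–Schur) composition of $p$ and $q$. Since $p$ has only real zeros of one sign, this composition has only real zeros for every real-rooted $q$. For $\deg q<k$ I will get this by a limiting argument, using continuity of zeros and Hurwitz's theorem.

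Consequences:
\begin{itemize}
\item $T(F_e)$ and $T(F_o)$ are real-rooted, and since their coefficients are positive, their zeros are negative.
\item For interlacing I use Hermite–Kakeya–Obreschkoff: $F_o\prec F_e$ implies that $\lambda F_e+\mu F_o$ is real-rooted for all real $\lambda,\mu$. Since $T$ preserves real-rootedness of \emph{all} real-rooted polynomials of degree $\le k$, not only those with negative zeros, $\lambda T(F_e)+\mu T(F_o)$ is real-rooted for all $\lambda,\mu$. Hence $T(F_e)$ and $T(F_o)$ interlace in some order.
\end{itemize}

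\textbf{Step 3: main obstacle, the direction of interlacing and degenerate cases.} Hermite–Kakeya–Obreschkoff only gives interlacing up to order, so I must show $T(F_o)\prec T(F_e)$ and not the reverse. My first attempt is a deformation argument. Set $b_j(t)=b_j^{\,t}$, or use the segment $p_t$ from $(x+1)^k$ to $p$; the segment stays in $\Hyp_k^+$ after normalizing, and at the identity endpoint the interlacing holds in the correct order. The interlacing order can only switch through a common zero or a degeneration, and tracking this along the path should rule out a switch.

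Failing that, I will fix the order directly. Both polynomials have degree $k$ with positive coefficients and negative zeros, so the order is determined by comparing the zero of largest modulus. Equivalently, it is determined by the sign of the Wronskian-type expression $T(F_e)'T(F_o)-T(F_e)T(F_o)'$, and I would show this sign is preserved.

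The remaining degenerate cases are:
\begin{itemize}
\item a common zero of $F_e$ and $F_o$ (index $<2k+1$);
\item $F_o=0$, which cannot occur for odd degree;
\item $F_e=cF_o$, which is preserved trivially.
\end{itemize}
I will handle these by approximating $F$ by stable polynomials and using closedness of $\HH_{2k+1}^\star$, taking into account that $F\ast G$ keeps degree $2k+1$ with positive constant term.
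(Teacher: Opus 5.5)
Your Steps 1 and 2 follow the paper's proof. Hermite--Biehler turns the hypothesis into $p=g\ast(x+1)^k\in\Hyp_k^+$. The Schur--Szeg\H{o} (Mal\'o--Schur) composition theorem, i.e.\ the paper's Corollary~\ref{RSch} with $k=l=-1$, makes every $\lambda T(F_o)+\mu T(F_e)=T(\lambda F_o+\mu F_e)$ real-rooted. Hermite--Kakeya--Obreschkoff then gives interlacing, and its weak form spares you the paper's bookkeeping with the common zeros $\xi_i$. (A small slip: in your notation $b_j=p_j$, not $p_j/\binom{k}{j}$; your formula for $T(q)$ already uses $b_j=p_j$.) You are right that this only gives interlacing up to order. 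For two polynomials of the same degree $k$, that order is exactly what Hermite--Biehler needs. The paper's proof passes over this point and simply writes $f_o\ast g\prec f_e\ast g$.

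Your Step 3, however, does not close this gap. The deformations you propose need not stay in $\Hyp_k^+$. Take $k=3$ and $p=(x+3)^2(x+\frac{1}{9})=x^3+\frac{55}{9}x^2+\frac{29}{3}x+1$, already monic with $p(0)=1$. Both the segment midpoint $\frac12\bigl((x+1)^3+p\bigr)=x^3+\frac{41}{9}x^2+\frac{19}{3}x+1$ and the point $t=\frac12$ of the path $b_j^{\,t}$, namely $x^3+\frac{\sqrt{165}}{3}x^2+\sqrt{29}\,x+1$, have negative discriminant. So for intermediate $t$ the operator $T_t$ no longer preserves real-rootedness (it sends $(x+1)^3$ to $p_t$), and the interlacing you want to track is not guaranteed. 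The preservation of the Wronskian sign is only announced, not shown.

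The missing step is a one-line coefficient comparison. Write $F_e=\sum e_jx^j$ and $F_o=\sum o_jx^j$, both of degree $k$, with negative zeros $\alpha_i$ and $\beta_i$ respectively. Then $F_o\prec F_e$ means $\beta_i\le\alpha_i<0$ for each $i$, so $o_0/o_k=\prod|\beta_i|\ge\prod|\alpha_i|=e_0/e_k$. Under $T$ both ratios are multiplied by $b_0/b_k>0$, so the same inequality holds for $T(F_o)$ and $T(F_e)$. If these two interlaced in the reverse order, the termwise comparison of zeros would give the opposite inequality between the products. That forces equality and hence identical zero sets, in which case $T(F_o)\prec T(F_e)$ holds as well.

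Your Wronskian idea also works if you evaluate it at $x=0$, where $T(F_e)'T(F_o)-T(F_e)T(F_o)'$ equals $b_0b_1(e_1o_0-e_0o_1)$. The deformation can also be repaired by moving the zeros of $p$ instead of its coefficients. A switch of order along such a path would force $T_t(F_e)\propto T_t(F_o)$, which is impossible for the injective $T_t$ unless $F_e\propto F_o$.
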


First, we require the following auxiliary corollary and theorem
(cf.\ \cite[Cor.~5.5.8, Cor.~5.5.10, Thm.~6.3.8]{rs2002}).

\begin{corollary}\label{RSch}
Let $f(x)=\sum_{\nu=0}^p a_\nu x^\nu$ be a polynomial of degree $p \geqslant 2$
with only real zeros. Let $g(x)=\sum_{\nu=0}^q b_\nu x^\nu$ be a polynomial of
degree $q\geqslant 2$, whose non-vanishing zeros are all real and of the same
sign. For both polynomials, a possible zero at the origin is assumed to be of
order less than $m:= \min \{p,q\}$. Then, for any integers $k \geqslant -1$,
$l \geqslant -1$, and $n \geqslant \max \{p,q\}$, the polynomial
\begin{equation*}
h_{kl}:=\sum_{\mu=0}^m \frac{a_\mu b_\mu}{(\mu !)^k ((n-\mu)!)^l} x^\mu
\end{equation*}
has only real zeros.
As a special case, if the zeros of $f$ are all of the same sign $\sigma$, and
those of $g$ are all of the same sign $\tau$, then, for any integers
$k\geqslant -1, l \geqslant -1$, and $n \geqslant \max \{p,q\}$, the polynomial
$h_{kl}$ has only real zeros, all of them being of sign $-\sigma \tau$.
\end{corollary}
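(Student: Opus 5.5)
The plan is to reduce everything to two classical facts and iterate them. The first is the Maló–Schur composition theorem: if $f$ has only real zeros and $g$ has only real zeros all of one sign, then the Hadamard product $\sum a_\mu b_\mu x^\mu$ has only real zeros. Schur's refinement of it gives the same conclusion for $\sum \mu!\,a_\mu b_\mu x^\mu$. The second is Laguerre's theorem that $\{1/\mu!\}_{\mu\geqslant0}$ is a multiplier sequence. This follows from the Pólya–Schur Theorem, since its Jensen-type generating function $\sum x^\mu/(\mu!)^2$ lies in the Laguerre–Pólya class with only negative zeros. Every $h_{kl}$ will be obtained from the basic product $h_{00}=f\ast g$, or from Schur's $h_{-1,0}$, by a finite chain of operations, each of which preserves reality of zeros.

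First, I would handle the index $k$. The cases $k=0$ and $k=-1$ with $l=0$ are exactly Maló's and Schur's theorems. Here the hypothesis that a zero at the origin has order $<m$ is what guarantees the product is not identically zero and has degree large enough to be meaningful. For $k\geqslant1$, I apply the multiplier sequence $\{1/\mu!\}$ to $h_{k-1,0}$, and by induction $h_{k0}$ has only real zeros.

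Next, I would treat the factor $1/((n-\mu)!)^l$ through reversal. For a polynomial $h$ of degree at most $n$, set $h^{\ast}(x)=x^nh(1/x)$. Its zeros are the reciprocals of the nonzero zeros of $h$, together with a zero at $0$, so it is real-rooted iff $h$ is. The coefficient of $x^{n-\mu}$ in $h^\ast$ is the $\mu$-th coefficient of $h$. Applying the multiplier sequence $\{1/\nu!\}$ to $h^\ast$ therefore divides that coefficient by $(n-\mu)!$, and reversing back yields $\sum c_\mu x^\mu/(n-\mu)!$. Iterating $l$ times gives $l\geqslant1$. For $l=-1$, multiplication by $(n-\mu)!$ is obtained the same way, by reversing and then applying Schur's $\mu!$-version in the form where one factor is $(x+1)^n$. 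Here $n\geqslant\max\{p,q\}$ is exactly what makes the reversal well defined.

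For the sign statement: if $f$ and $g$ have all zeros of signs $\sigma$ and $\tau$, then their coefficient sequences are either all of one sign or strictly alternating. Dividing by the positive numbers $(\mu!)^k((n-\mu)!)^l$ does not change this, so the coefficients of $h_{kl}$ have no sign changes when $\sigma\tau<0$, and alternate when $\sigma\tau>0$. Since $h_{kl}$ is real-rooted, Descartes' rule then forces all its zeros to have sign $-\sigma\tau$, because $0$ is excluded by the order assumption. The main obstacle I expect is bookkeeping in the degenerate cases. One must check that the multiplicity of the zero at the origin and the top degree behave well under reversal and under the multiplier sequences, so that no step collapses to the zero polynomial or produces spurious multiple zeros at $0$; this is precisely where the assumption on the order of the origin zero, and the condition $n\geqslant\max\{p,q\}$, enter.
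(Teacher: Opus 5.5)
The paper gives no proof of this corollary; it quotes Rahman--Schmeisser. Your handling of $l\geqslant 0$ is sound. Mal\'o and Schur give $h_{00}$ and $h_{-1,0}$, Laguerre's sequence $\{1/\mu!\}$ raises $k$, and conjugating it by $h\mapsto x^nh(1/x)$ raises $l$.

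The case $l=-1$, however, has a genuine gap. The step you describe does not produce the factor $(n-\mu)!$. For $h=\sum c_\mu x^\mu$, Schur's product of $h^\ast=\sum_\nu c_{n-\nu}x^\nu$ with $(x+1)^n$ is $\sum_\nu \nu!\binom{n}{\nu}c_{n-\nu}x^\nu=\sum_\nu \frac{n!}{(n-\nu)!}c_{n-\nu}x^\nu$. Reversing back gives $n!\sum_\mu c_\mu x^\mu/\mu!$, which is just one more division by $\mu!$.

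No repair that acts on the real-rooted polynomial $h_{k0}$ alone can work either. The map $c_\mu\mapsto (n-\mu)!\,c_\mu$ does not preserve real zeros: for $n=2$ it turns $(x+1)^2$ into $x^2+2x+2$. Lowering an index to $-1$ must use $f$ and $g$ themselves.

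For $k\geqslant 0$ this is easy. Apply Schur's theorem to the reversed pair $x^nf(1/x)$, $x^ng(1/x)$, reverse back to obtain $h_{0,-1}$, and then apply $\{1/\mu!\}$. A zero of the second factor at the origin is harmless for the $\mu!$-product: for $x^r\tilde g$, the product equals $x^r$ times Schur's product of the $r$-th derivative of the first factor with $\tilde g$.

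The case $(k,l)=(-1,-1)$ is different. Here $h_{-1,-1}=\sum\mu!(n-\mu)!\,a_\mu b_\mu x^\mu=n!\sum\binom{n}{\mu}^{-1}a_\mu b_\mu x^\mu$, which is the Schur--Szeg\H{o} composition theorem and rests on Grace's apolarity theorem. Multiplier sequences only increase $k$ or $l$, so from Schur's $(-1,0)$ and its reversed form $(0,-1)$ you can never reach $(-1,-1)$. The missing idea is to take Schur--Szeg\H{o} as the base case $(-1,-1)$, then apply $\{1/\mu!\}$ $k+1$ times and its reversal $l+1$ times.

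This is not a marginal case. It is exactly what the proof of Theorem~\ref{scase} uses: there only $g*(x+1)^k=\sum\binom{k}{\mu}b_\mu x^\mu$ is known to have negative zeros, and the binomial weights are cancelled by $\mu!(k-\mu)!$ with $n=k$.

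A smaller slip concerns the sign statement. A zero at the origin is not excluded by the order assumption, which permits one. It is excluded by the hypothesis that all zeros of $f$ and $g$ have a definite sign, which forces $a_0b_0\neq 0$.
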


\begin{theorem}[Hermite-Kakeya]\label{HK}
Let $P$ and $Q$ be non-constant polynomials with real coefficients. Then $P$
and $Q$ have strictly interlacing zeros if and only if, for all $\lambda,
\mu \in \RR$ such that $\lambda^2+\mu^2 \neq 0$, the polynomial
$R(x):= \lambda P(x)+\mu Q(x)$ has simple, real zeros.
\end{theorem}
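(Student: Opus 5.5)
The plan is to prove both directions through the rational function $Q/P$ and the Wronskian-type expression $W:=P'Q-PQ'$. The key observation is that, wherever $P\neq 0$,
\begin{equation*}
\left(\frac{Q}{P}\right)'=-\frac{W}{P^2}.
\end{equation*}
So monotonicity of $Q/P$ between consecutive zeros of $P$ is equivalent to $W$ keeping a constant sign there. Throughout, \emph{strictly interlacing} means the notion of Subsection~2.2 with all inequalities strict. In particular, $P$ and $Q$ then have real simple zeros, no common zero, and degrees differing by at most one.

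\textbf{($\Rightarrow$)} Suppose $P$ and $Q$ interlace strictly, and let $\alpha_1<\dots<\alpha_p$ be the zeros of $P$ (swap the roles of $P$ and $Q$ if necessary so that $\deg P\geqslant \deg Q$). I would use the partial fraction decomposition
\begin{equation*}
\frac{Q}{P}=c+\sum_{i=1}^{p}\frac{r_i}{x-\alpha_i}, \qquad r_i=\frac{Q(\alpha_i)}{P'(\alpha_i)} .
\end{equation*}
Here $c$ is constant because $\deg Q\leqslant \deg P$. Strict interlacing means that $Q$ changes sign exactly once between consecutive zeros of $P$, and $P'$ also alternates in sign at the $\alpha_i$. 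Hence all $r_i$ are nonzero and of the same sign, and $Q/P$ is strictly monotone on each interval $(\alpha_i,\alpha_{i+1})$, running from $\pm\infty$ to $\mp\infty$.

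Now fix $(\lambda,\mu)\neq(0,0)$. If $\mu=0$, then $R=\lambda P$ has simple real zeros by hypothesis. If $\mu\neq 0$, the zeros of $R$ are the solutions of $Q/P=-\lambda/\mu$, since $R$ cannot vanish at any $\alpha_i$ (as $Q(\alpha_i)\neq0$). Each of the $p-1$ bounded intervals contributes exactly one simple solution, simple because the derivative of $Q/P$ is nonzero there. The two unbounded intervals $(-\infty,\alpha_1)$ and $(\alpha_p,\infty)$ together contribute one more solution exactly when $-\lambda/\mu\neq c$, which is exactly the case $\deg R=p$. Otherwise $\deg R=p-1$ and $R$ has no zero there. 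In either case $R$ has $\deg R$ simple real zeros.

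\textbf{($\Leftarrow$)} Taking $\mu=0$ and then $\lambda=0$ shows that $P$ and $Q$ have simple real zeros. Next suppose $W(x_0)=0$ for some real $x_0$. Then the system
\begin{equation*}
\lambda P(x_0)+\mu Q(x_0)=0,\qquad \lambda P'(x_0)+\mu Q'(x_0)=0
\end{equation*}
has determinant $\pm W(x_0)=0$, so it has a nontrivial solution $(\lambda,\mu)$. For that pair, $x_0$ is a multiple zero of $\lambda P+\mu Q$, a contradiction. The same argument rules out common zeros of $P$ and $Q$.

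Thus $W$ has no real zeros and therefore a constant sign, so $Q/P$ is strictly monotone on every interval between consecutive zeros of $P$. Because there is no common zero, $Q/P$ tends to infinite limits of opposite signs at the two ends of each such interval. Hence $Q$ has exactly one zero in each $(\alpha_i,\alpha_{i+1})$, and symmetrically $P$ has exactly one zero between consecutive zeros of $Q$. This yields strict alternation of the zeros.

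Finally I would do the degree bookkeeping. If $\deg Q\leqslant \deg P-2$, the count above forces $Q$ to have at least $\deg P-1$ zeros, which is impossible. So the degrees differ by at most one, and the alternation is exactly interlacing in the sense of Subsection~2.2.

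I expect the main obstacle to be the careful accounting of the unbounded intervals and of the degree drop of $\lambda P+\mu Q$ in the forward direction. This is where the constant $c$, the leading coefficients, and the cases $\deg P=\deg Q$ versus $\deg P=\deg Q+1$ must all be handled. The rest is the standard monotonicity argument.
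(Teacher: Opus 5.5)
Your proof is correct, and it takes a different route from the paper only in the sense that the paper gives no proof of this statement. The paper quotes Hermite--Kakeya from \cite[Thm.~6.3.8]{rs2002} and uses it as a black box in the proof of Theorem~\ref{scase}.

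Your argument is a self-contained, elementary proof built on the rational function $Q/P$:
\begin{itemize}
\item For ($\Rightarrow$), you write $Q/P$ in partial fractions with residues $r_i=Q(\alpha_i)/P'(\alpha_i)$ of one sign. This makes $Q/P$ strictly monotone on every component of $\mathbb{R}\setminus\{\alpha_i\}$, and you then count the solutions of $Q/P=-\lambda/\mu$.
\item For ($\Leftarrow$), a real zero of $W=P'Q-PQ'$ would produce a pair $(\lambda,\mu)$ giving a double zero. So $W$ has constant sign, and $Q/P$ runs monotonically between poles.
\end{itemize}

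Compared with the bare citation, your version makes two things explicit that the paper relies on silently. First, it fixes what ``strictly interlacing'' means: the strict form of the relation $\prec$ from Subsection~2.2, extended to arbitrary real polynomials. Second, in the converse it proves the degree bookkeeping $|\deg P-\deg Q|\leqslant 1$. That is exactly what the paper needs when it passes from ``$\lambda\, f_o*g+\mu\, f_e*g$ has simple real zeros'' to an interlacing relation in its own sense. The citation, in turn, buys brevity and the generality of a standard reference.

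Two small points deserve a sentence each in a write-up:
\begin{itemize}
\item When $P$ and $Q$ are proportional, $W\equiv 0$, and the pair $(\lambda,\mu)$ you construct gives $R\equiv 0$. Your ``contradiction'' therefore rests on the convention, which the theorem itself needs, that the zero polynomial does not have simple zeros.
\item In the forward direction, when $-\lambda/\mu=c$ you assert $\deg R=p-1$ exactly. This follows because $R(\alpha_i)=\mu Q(\alpha_i)\neq 0$, so $R\not\equiv 0$, and you have already found $p-1$ distinct real zeros. Equivalently, the coefficient of $x^{p-1}$ in $Q-cP$ is the leading coefficient of $P$ times $\sum_i r_i\neq 0$.
\end{itemize}
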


\begin{proof}[Proof of Theorem \ref{scase}]
Let us consider a polynomial $G$ that satisfies the conditions of the theorem.
When $k=1$, then $G$ is quasi-stable. We consider $k \geqslant 2$.
Note that the product $G \ast Q^{2k+1}$ can be written as
\begin{equation*}
G \ast Q^{2k+1} (x) = ((x+1)g(x^2)) * ((x+1)(x^2+1)^k)=
(x+1)(g(x^2) * (x^2+1)^k).
\end{equation*}
Since $G \ast Q^{2k+1}$ is quasi-stable, by the Hermite–Biehler Theorem,
$g(x)*(x+1)^k$ has only negative zeros. Let $F(x)=f_e(x^2)+xf_o(x^2)$.
If $F \in \HH_{2k+1}^\star$, then, again by the Hermite–Biehler Theorem,
$f_o \prec f_e \textrm{ and } \GCD(f_e,f_o)=w(x) \textrm{ has only negative
zeros. }$ Let $\tilde{f_o}(x):=\frac{f_o(x)}{w(x)} $ and
$\tilde{f_e}(x):=\frac{f_e(x)}{w(x)}$.
By applying Theorem \ref{HK}, we know that the polynomial
\begin{equation*}
H(x)=\lambda f_o(x) + \mu f_e(x)=w(x)\left(\lambda \tilde{f_o}(x)
+ \mu \tilde{f_e}(x)\right)
\end{equation*}
has only real zeros for all $\lambda, \mu \in \RR  \textrm{ such that }
\lambda^2+\mu^2 \neq 0$ as
\begin{equation*}
\tilde{H}(x)=\lambda \tilde{f_o}(x) + \mu \tilde{f_e}(x)
\end{equation*}
has only simple, real zeros for all $\lambda, \mu \in \RR  \textrm{ such that }
\lambda^2+\mu^2 \neq 0$.
In turn, from Corollary \ref{RSch}, we know that since $g * (x+1)^k$ has
negative roots, the polynomial
\begin{equation*}
\left(H * g\right) (x) = (\lambda f_o(x) + \mu f_e(x)) * g(x)
= \lambda f_o*g(x) + \mu f_e*g(x)
\end{equation*}
has only real zeros for all $\lambda, \mu \in \RR  \textrm{ such that }
\lambda^2+\mu^2 \neq 0$. Let us denote the common multiple zeros of $f_o*g$
and $f_e*g$ as $\xi_i \;(i=0, \ldots, m)$, where $m \leqslant k$. If $m=k$,
then $f_o*g=f_e*g$, which have negative zeros (from Corollary \ref{RSch}), and
the polynomial $F*G$ is quasi-stable. If, however, $m<k$ then
\begin{equation*}
(H * g)(x)=\sum_{i=0}^m(x-\xi_i)\left(\lambda
\frac{f_o*g(x)}{\sum_{i=0}^m(x-\xi_i)} + \mu
\frac{f_e*g(x)}{\sum_{i=0}^m(x-\xi_i)}\right).
\end{equation*}
Thus, we obtain that the polynomial
\begin{equation*}
\lambda \frac{f_o*g(x)}{\sum_{i=0}^m(x-\xi_i)} + \mu
\frac{f_e*g(x)}{\sum_{i=0}^m(x-\xi_i)}
\end{equation*}
has simple, real zeros for all $\lambda, \mu \in \RR  \textrm{ such that }
\lambda^2+\mu^2 \neq 0$. We conclude that
\begin{equation*}
 \frac{f_o*g(x)}{\sum_{i=0}^m(x-\xi_i)} \prec
\frac{f_e*g(x)}{\sum_{i=0}^m(x-\xi_i)},
\end{equation*}
and therefore that
\begin{equation*}
f_o*g \prec f_e *g.
\end{equation*}
Again, by Corollary \ref{RSch}, the zeros of $f_o*g$ and $f_e*g$ are negative,
which completes the proof.
\end{proof}

Unfortunately, to the best of our knowledge, in this case, there are no
available tools for determining the stability of $F*G$, even under the
assumption that $F$ is stable.

\section{Examples}\label{exes}

\begin{ex}\label{ex1}
If $f \in \HH_5$ and $g \in \WW_n$, then $f*g$ need not be stable.
Let us consider
\begin{equation*}
f(x)=100x^5 + 230x^4 + 80x^3 + 164x^2 + 8x + 16
\end{equation*}
and
\begin{equation*}
g(x)=6.17x^5 + 6.4x^4 + 8.96x^3 + 6.62x^2 + 6.4x + 4.66
\end{equation*}
It is easy to verify that $g \in \WW_n$ and that $f$ is stable, as
\begin{equation*}
\Delta_2(f)=2000, \qquad \Delta_4(f)=6400.
\end{equation*}
Their Hadamard product is
\begin{equation*}
(f \ast g) (x) = 617 x^5 +1472 x^4 +716.8x^3+1085.68 x^2 + 51.2 x
+ 74.56.
\end{equation*}
Checking the principal minors of the matrix $H(f \ast g)$, we have
\begin{equation*}
\Delta_2(f \ast g) = 385265.04 
\end{equation*}
but
\begin{equation*}
\Delta_4(f \ast g) = -3.686087108608  \cdot  10^7.
\end{equation*}
In fact, $f \ast g$ has two roots with positive real parts
$0.000062127 \pm 0.276826i$.
\end{ex}

\begin{ex}\label{ex2}
As for $g \in \WW_n$ required inequalities are equivalent to positivity of
every two by two minor of a matrix $H(g)$, we could also ask if for $n=5$ the
positivity of every three by three minor is required as a sufficient condition.
Even though it might be, it is not necessary. Consider
$g(x)=x^5 + x^4 + 5.5 x^3 + 4.75 x^2 + 10 x + 4.5$. Its Hurwitz matrix is
as follows
\begin{equation*}
H(g)=\begin{bmatrix}
1 & 4.75& 4.5 & 0 & 0 \\
1 & 5.5 & 10  & 0 & 0\\
0 & 1   & 4.75& 4.5 & 0 \\
0 & 1   & 5.5 & 10  & 0 \\
0 & 0   & 1 & 4.75& 4.5  \\
\end{bmatrix}.
\end{equation*}
The polynomial $g$ satisfies both conditions $\frac{g\ast Q^3_1}{x} \in \HH_3$
and $g\ast Q^5 \in \HH_5$, however, its third principle minor is negative
\begin{equation*}
\begin{vmatrix}
1 & 4.75& 4.5   \\
1 & 5.5 & 10   \\
0 & 1   & 4.75  \\
\end{vmatrix} =-1.9375.
\end{equation*}
The middle one, on the other hand, is positive
\begin{equation*}
\begin{vmatrix}
 5.5 & 10  & 0 \\
 1   & 4.75 & 4.5\\
 1 & 5.5 & 10 \\
\end{vmatrix} =70.125.
\end{equation*}
\end{ex}

\section*{Acknowledgments}
The author thanks Michał Góra for having brought the problem of this paper to his attention,


\bibliographystyle{cas-model2-names}

\end{document}